\newcommand{\R}{\mathbb R}
\newcommand{\N}{\mathbb N}
\newcommand{\bea}{\begin{eqnarray}}
\newcommand{\eea}{\end{eqnarray}}
\newcommand{\norm}[1]{\left\Vert#1\right\Vert}
\newcommand{\D}{\displaystyle}
\newcommand{\beq}{\begin{equation}}
\newcommand{\eeq}{\end{equation}}
\newtheorem{theorem}{Theorem}[section]
\newtheorem{remark}[theorem]{Remark}
\newtheorem{lemma}[theorem]{Lemma}
\newtheorem{definition}[theorem]{Definition}
\begin{document}

\title[Determination of the Calcium channel distribution....]{Determination of the Calcium channel distribution in the olfactory system} 

\author{C. Conca$^1$, R.  Lecaros$^{1,2}$, J. H. Ortega$^1$ \& L. Rosier$^3$}
\address{$^1$Centro de Modelamiento Matem\'atico (CMM) and Departamento de Ingenier\'ia Matem\'atica,
Universidad de Chile (UMI CNRS 2807), Avenida Blanco Encalada 2120, Casilla 170-3, Correo 3, Santiago, Chile.}
\email{cconca@dim.uchile.cl,jortega@dim.uchile.cl}

\smallskip\

\address{$^2$Basque Center for Applied Mathematics - BCAM, Mazarredo 14, E-48009, Bilbao, Basque Country, Spain.}
\email{rlecaros@bcamath.org}

\smallskip\

\address{$^3$Institut Elie Cartan, UMR 7502 UdL/CNRS/INRIA,
B.P. 70239, 54506 Vand\oe uvre-l\`es-Nancy Cedex, France.}
\email{Lionel.Rosier@univ-lorraine.fr}

\begin{abstract}
In this paper we study a linear inverse problem with a biological interpretation, which is
modeled by a Fredholm integral equation of the first kind.
When the kernel in the Fredholm equation is represented by step functions, we obtain identifiability, stability 
and reconstruction  results.
Furthermore, we provide a  numerical reconstruction algorithm for the kernel, whose main feature is that a non-regular mesh
has to be used to ensure the 
invertibility of the matrix representing the numerical discretization of the system. 
Finally, a second identifiability result for a polynomial approximation of degree less than nine of the kernel is 
also established. 
\end{abstract}
\maketitle

\section{Introduction}
\
\par

In this work we study an integral inverse problem coming from the biology of the olfactory system. 
The transduction of an odor into an electrical signal is accomplished by a depolarising influx 
of ions through cyclic-nucleotide-gated (CNG) channels in the membrane. Those channels, 
that form the lateral surface of the cilium, are activated by adenosine 3', 5'-cyclic 
monophosphate (cAMP).
 
D.A.~French et al. \cite{Bio:FFGKK} propose a mathematical model for the dynamics of 
cAMP concentration, consisting of two nonlinear differential equations and a constrained 
Fredholm integral equation of first kind. The unknowns of the problem are the concentration 
of cAMP, the membrane potential 
and the distribution $\rho$  of CNG channels 
along the length of a cilium. 
A very natural issue is whether it is possible to recover the distribution of CNG channels 
along the length of a cilium by only measuring  the electrical activity produced by the 
diffusion of cAMP into cilia. A simple numerical method  to obtain estimates of channels 
distribution is also proposed in  \cite{Bio:FFGKK}. Certain computations indicate that this 
mathematical problem is ill-conditioned.

Later, D.A.~French \& D.A.~Edwards \cite{Bio:FrenchEdwards} studied the above inverse 
problem by using perturbation techniques. A simple perturbation approximation was derived 
and used to solve the inverse problem, and to obtain estimates of the spatial distribution of 
CNG ion channels. A one-dimensional computer minimization and a special delay iteration 
were used with the perturbation formulas to obtain approximate channel distributions in the 
cases of simulated and experimental data. Moreover, D.A.~French \& C.W.~Groetsch \cite{Bio:FrenchGroetsch} introduced some  simplifications and approximations  in the problem, 
obtaining an analytical solution for the inverse problem. A numerical procedure was proposed 
for a class of integral equations suggested by this simplified model and numerical results 
were compared to laboratory data.

In this paper we consider the linear problem proposed in \cite{Bio:FrenchGroetsch}, with 
an improved approximation of the kernel, along with studying the identifiability, stability and 
numerical reconstruction for the corresponding inverse problem.

The inverse problem consists in determining a function $\rho = \rho(x)>0$ from the 
measurement of  
    \beq\label{Int:Emodel1:01}
    I_m[\rho](t)=J_0\int_0^L\rho(x)K_m(t,x)dx,
    \eeq
for $t \in I,$ where $I$ is a time interval, $\rho$ is the channel distribution, $J_0$ is a 
positive constant and the kernel $K_m(t,x)$ is defined by   
    \beq\label{Int:B_DkerAprox}
    K_m(t,x)=F_m(w(t,x)),
    \eeq 
where  $w(t,x)$, defined in \eqref{B_H01},  represents an approximation of the  concentration of cAMP at point $(t,x)$ and 
$F_m$ is a step function approximation of the Hill function $F$, given by
\beq\label{Int:Dhillfunc}
F(x)=\frac{x^n}{x^n+K_{1/2}^n}.
\eeq
Here, the exponent $n$ is an experimentally determined parameter and  $K_{1/2}>0$ is a constant which corresponds to the  half-bulk concentration. 

Under a strong assumption about the regularity of $\rho$ (namely, $\rho $ is analytic), 
we obtain in Theorem~\ref{B_Tiden01:01} an identifiability result for \eqref{Int:Emodel1:01}
with a single measurement of $I_m[\rho ]$ on an {\em arbitrary small} interval around zero. 
The second identifiability result, Theorem~\ref{B_Tiden02:02}, requires weaker regularity 
assumptions about $\rho$ (namely, $\rho \in L^2(0,L)$), but it requires the measurement of $I_m[\rho ]$  on a large time interval. 

Furthermore,  in Theorem~\ref{B_Tstab01}, using appropriate weighted norms and the Mellin transform, 
we obtain a general stability result for the operator $I_m[\rho]$ for $\rho\in L^2(0,L)$. Using a non-regular mesh for the approximation of $F_m$, 
we develop a reconstruction procedure in Theorem ~\ref{B_Trec} 
to recover $\rho$ from $I_m$.  Additionally, for this non-regular mesh, 
a general stability result for a large class of norms is rigorously established
 in Theorem~\ref{B_Tstab03}.

\smallskip

On the other hand, we also investigate the same inverse problem with another approximation of the kernel obtained by replacing 
Hill's function by its Taylor expansion of degree $m$ around $c_0>0$. 

More precisely, the polynomial kernel 
approximation is defined as
\beq\label{IntB_DkerAprox02}
PK_m(t,x)=P_m(c(t,x)-c_0),
\eeq
where $P\in \R [x]$, deg$\, (P)\le m$ is such that 
\[
F(x)= P_m(x-c_0) + O (|x-c_0|^{m+1}), 
\]
 and  $c(t,x)$, the concentration of cAMP,  is defined as the solution of the diffusion problem \eqref{Int:Emodel2}.
Thus, the total current with polynomial approximation is given by 
\beq\label{Int:B_DpoliCurrent}
PI_m[\rho](t)=\int_0^L\rho(x)PK_m(t,x)dx \quad\forall t>0.
\eeq

In Theorem \ref{B_Tiden03} we derive an identifiability result for the operator $PI_m$, 
when the degree of $P_m$ is less than nine.   

\smallskip
The paper is organized as follows. In Section 2, we set the problem, introduce the principal 
assumptions and some operator $\Phi_m$ that we use to derive the main 
results regarding the operator $I_m$. These results are presented in Section~3. Section~4 is 
devoted to prove the identifiability theorems. Section~5 is devoted to the proof of 
Theorem~\ref{B_Tstab01} concerning the stability of $I_m$. The proof of  the results 
involving the reconstruction procedure are developed in Section~6, while the numerical 
algorithm and examples are shown in Section~7. Finally, in Section~8, we prove an 
identifiability result for  $PI_m$ (Theorem~\ref{B_Tiden03}).  

\section{Setting the problem}
\
\par

In this section we will set the mathematical model related to the inverse problem 
arising in olfaction experimentation.

\smallskip
The starting point is the linear model introduced in \cite{Bio:FrenchGroetsch}.
As already mentioned, a nonlinear integral equation model was developed 
in \cite{Bio:FFGKK} to determine the spatial distribution of ion channels along the length 
of frog olfactory cilia. The essential nonlinearity in the model arises from the binding of the 
channel activating ligand to the cyclic-nucleotide-gated ion channels as the ligand 
diffuses along the length of the cilium. We investigate a linear model for this process,
in which the binding mechanism is neglected, leading to a particular type of linear 
Fredholm integral equation of the first kind with a diffusive kernel. 
The linear inverse problem consists in determining $\rho = \rho(x)>0$ from the measurement of 
    \beq\label{Int:Emodel1}
    I[\rho](t)=J_0\int_0^L\rho(x)K(t,x)dx,
    \eeq
    where the kernel is   
    \beq\label{ABC}
    K(t,x)=F(c(t,x)),
    \eeq 
    $F$ being given by \eqref{Int:Dhillfunc}
and  $c$ denoting the concentration of cAMP which is governed by  the following diffusion
boundary value problem:
\beq\label{Int:Emodel2}
\left\{
\begin{array}{rcll}
\D\frac{\partial c}{\partial t}-D\frac{\partial^2 c}{\partial x^2}&=&0,& t> 0,\; x\in(0,L),\\
c(0,x)&=&0,& x\in(0,L),\\
c(t,0)&=&c_0,& t>0,\\
\D\frac{\partial c}{\partial x}(t,L)&=&0,& t>0.
\end{array}\right.
\eeq

The (unknown) function $\rho$ is the ion channel density function, and
$c$ is the concentration of a channel activating ligand that is diffusing from 
left-to-right in a thin cylinder (the interior of the cilium) of length $L$ with 
diffusivity constant $D$. $I[\rho](t)$ is a given total transmembrane current, 
the constant $J_0$ has units of current/length, and $c_0$ is the maintained 
concentration of cAMP at the open end of the cylinder (while $x = L$ is considered as
the closed end).

\smallskip

Thus, the inverse problem consists in obtaining $\rho$ from the 
measurement  of $I[\rho](t)$  in some time interval. 

\smallskip
We note that this is a Fredholm integral equation of the first kind; that is,
\beq
I[\rho](t)=\int_0^LK(t,x)\rho(x)dx,
\eeq
where $K(t,x)=J_0F(c(t,x))$ is the {\em kernel} of the operator.  
The associated inverse problem is  often ill-posed. For example, if $K$ is 
sufficiently smooth, the operator defined above is compact from $L^p(0,L)$ to $L^p(0,T)$ for  $1<p<\infty$.  
Even if the operator $I$ is injective, its inverse will not be continuous. Indeed, if $I$ is compact and $I^{-1}$ is continuous, 
 then it follows that the identity map in $L^p(0,L)$ is compact, a property  which is clearly false.
 
In what follows, we consider a simplified version of the above problem under 
more general assumptions than those in \cite{Bio:FrenchGroetsch}. 
Let us introduce the following generic assumptions:

\begin{itemize}
\item[(i)] We can approximate the solution $c(t,x)$ of \eqref{Int:Emodel2}
 as follows:
\beq
c(t,x)\simeq w(t,x)= c_0\, \textrm{erfc}\left(\frac {x}{2\sqrt{Dt}}\right),\label{B_H01}
\eeq
  where $\textrm{erfc}$ is the complementary error function:
\beq\label{B_H02}
\textrm{erfc}(z)= 1-\frac{2}{\sqrt{\pi}}\int\limits_0^z \textrm{exp}(-\tau^2)d\tau.
\eeq

\item[(ii)]  We consider the following approximation of 
Hill's function given in  \eqref{Int:Dhillfunc}
\beq \label{B_H03}
F(x)\simeq F_m(x)=F(c_0)\sum_{j=1}^m a_jH(x-\alpha_j)\quad\forall x\in[0,c_0],\eeq
where $H$ is the Heaviside unit step function, i.e.
\beq \label{B_H04}
H(u)=\left\{
\begin{array}{ccc}
1 & \textrm{if } u\geq 0, \\ \\
0 & \textrm{if } u< 0,
\end{array}
\right.
\eeq
 and  $a_j,\alpha_j$ are positive constants such that
\beq\label{B_H05}
\sum_{j=1}^ma_j=1,
\eeq
and 
\beq\label{B_H06}
0<\alpha_1<\alpha_2<\cdot\cdot\cdot<\alpha_m<c_0,
\eeq
and hence, $\{\alpha_j\}_{j=1}^m$ defines a partition of the interval $(0,c_0).$ 
\end{itemize}

With the above assumptions we define  the approximate total current 
\beq\label{B_Dfunc01}
I_m[\rho](t)=J_0\int_0^L\rho(x)K_m(t,x)dx,
\eeq
where 
\beq\label{B_DkerAprox}
K_m(t,x)=F_m(w(t,x))=F_m\left(c_0\,\textrm{erfc}\big( \frac {x}{2\sqrt{Dt}}\big)\right).
\eeq

Therefore, our inverse problem consists in  recovering $\rho$ from the measurement 
of $I_m[\rho](t)$ for all $t\geq 0.$

\smallskip
For any $\gamma>0$, we consider  the function $\sigma_\gamma(x)=|x|^\gamma$,   
and introduce the following weighted norms 
$$
\begin{array}{lcl}
\D\norm{f}_{0,\gamma,b}&=&\D\norm{\sigma_\gamma f}_{L^2(0,b)}
,\\ \\
\D\norm{f}_{1,\gamma,b} &=&\D\norm{\sigma_\gamma f}_{H^1(0,b)},\\ \\
\D\norm{f}_{-1,\gamma,b} &=&\D\norm{\sigma_\gamma f}_{H^{-1}(0,b)}.
\end{array}
$$
We set 
  \beq 
   L_k=L/{\beta_k}\;\; \textrm{ for } k=1,...,m,\;\;\; \textrm{and  } L_0=0,
\eeq
where 
 \beq\label{B_Dbeta01}
 \beta_j=\textrm{erfc}^{-1}(\alpha_j/c_0)2\sqrt{D}\;\;\textrm{for } j=1,...,m. 
 \eeq
 
On the other hand,  we have 
\begin{equation}
\begin{array}{rll}
I_m[\rho](t) & = &\displaystyle J_0\int_0^L\rho(x)K_m(t,x)dx \\
&=&\displaystyle  J_0F(c_0)\sum_{j=1}^m a_j \int_0^L \rho(x) H(w(t,x)-\alpha_j)dx \\ 
&=&\displaystyle  J_0F(c_0)\sum_{j=1}^m a_j \int\limits_{ G_j(t)\cap(0,L)} \rho(x)dx,
\end{array}
\end{equation}
with $G_j(t)=\{x\in\R:\;\;w(t,x)\geq \alpha_j\}$. 
Since the ``$\textrm{erfc}$'' function is decreasing, we have 
\beq
G_j(t)=\big[0,\beta_j\sqrt{t}\big],\eeq
where  $\{\beta_j\}_{j=1}^m$ are given by \eqref{B_Dbeta01}. (Note that 
$\beta_1>\beta_2>\cdot\cdot\cdot>\beta_m$.) Thus, we have 
\beq\label{B_D:ec_Im}
I_m[\rho](t)=J_0F(c_0)\bigg(\sum_{j=1}^m a_j \int\limits_0^{h_j(\sqrt{t})} \rho(x)dx\bigg),
\eeq
where  $h_j(s)=\min\{L,\beta_j s\}.$

\smallskip

Next, we define
\beq\label{B_L04E01}
\begin{array}{ccc}
\Phi_m[\varphi](t)&=&\D\sum_{j=1}^m a_j \varphi\left(h_j(t)\right)\;\; \forall t\geq 0,
\end{array} 
\eeq
and obtain 
 \beq\label{rela:Phi:Im}
  I_m[\rho](t)=J_0F(c_0)\Phi_m[\varphi](\sqrt{t}),
 \eeq
 with 
 $$
 \varphi(x)=\int_0^x\rho(\tau)d\tau.
 $$
 
Clearly,  $\Phi_m$ is linear, and it follows from \eqref{B_H05} that $\Phi _m (1)=1$,
and that for any $f \in L^\infty(0,L)$ we have
$$
\norm{\Phi_m[f]}_{ L^\infty(0,L_m)}\leq \norm{f}_{ L^\infty(0,L)}.
$$
Furthermore, for any $f\in C([0,L])$ with $f(L)=0$, we have 
\beq\label{cont:PhiLp}
\norm{\Phi_m[f]}_{ L^p(0,L_m)}\leq
\left(\sum_{j=1}^m a_j\beta_j^{-1/p}\right)\norm{f}_{ L^p(0,L)},\qquad 1\le p<\infty.
\eeq

\section{Main results}
\
\par

In this section we present the main results in this paper.  We begin by  studying 
the functional $\Phi_m$, defined in \eqref{B_L04E01}. It is worth noticing with \eqref{rela:Phi:Im} that the 
identifiability for $\Phi_m$ is equivalent to the identifiability for $I_m$. 

\smallskip
Firstly, we discuss some identifiability results for the operator $\Phi_m$.  
We begin with the analytic case.

\smallskip
\begin{theorem}[Identifiability for analytic functions] \label{B_Tiden01}
If $\varphi:[0,L]\to \R$ is an analytic function satisfying
\beq
\Phi_m[\varphi](t)=0\quad\forall t\in(0,\delta)
\eeq
for some $\delta>0$, then $\varphi\equiv 0$ in $[0,L].$
\end{theorem}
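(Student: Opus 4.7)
The plan is to reduce the identity $\Phi_m[\varphi](t)=0$ to a simple condition on the Taylor coefficients of $\varphi$ at $0$, and then propagate the vanishing from a neighborhood of $0$ to the whole of $[0,L]$ by analyticity.

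First I would restrict to sufficiently small $t$. Since $\beta_1>\beta_2>\cdots>\beta_m>0$, for $t<\min\{\delta,L_1\}$ (where $L_1=L/\beta_1$) one has $\beta_j t<L$ for every $j$, so $h_j(t)=\beta_j t$. The hypothesis thus becomes
\beq\label{plan:reduce}
\sum_{j=1}^m a_j\,\varphi(\beta_j t)=0,\qquad t\in(0,\eta),
\eeq
for some $\eta>0$. This is the crux of the argument; the nonlinear minimum in the definition of $h_j$ disappears and one is left with a dilation-type functional equation.

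Next, since $\varphi$ is analytic on $[0,L]$, it admits a convergent power series $\varphi(x)=\sum_{k\ge 0}c_k x^k$ on a neighborhood of $0$. Shrinking $\eta$ if necessary so that every $\beta_j t$ stays in the disk of convergence, I can substitute term by term into \eqref{plan:reduce} and interchange the finite sum in $j$ with the power series in $k$, obtaining
\beq\label{plan:coef}
\sum_{k=0}^\infty \Bigl(\sum_{j=1}^m a_j\beta_j^k\Bigr)\,c_k\, t^k=0\qquad\forall t\in(0,\eta).
\eeq
By the identity theorem for power series, each coefficient in \eqref{plan:coef} must vanish. Because $a_j>0$ and $\beta_j>0$ for every $j$, the factor $\sum_{j=1}^m a_j\beta_j^k$ is strictly positive for every $k\ge 0$, which forces $c_k=0$ for all $k$. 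Thus $\varphi\equiv 0$ on a neighborhood of $0$ in $[0,L]$.

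Finally, since $\varphi$ is analytic on the connected set $[0,L]$ and vanishes on an open subset (relative to $[0,L]$) containing $0$, the principle of analytic continuation yields $\varphi\equiv 0$ on all of $[0,L]$. The only real subtlety is the passage to \eqref{plan:reduce}: one must be sure to work in the regime $t<L_1$ so that the cutoff $h_j(t)=\min\{L,\beta_j t\}$ is linear in $t$; once this is in place, the positivity of $\{a_j\}$ and $\{\beta_j\}$ makes the conclusion immediate, so I do not expect any significant obstacle in this proof.
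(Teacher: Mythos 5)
Your proposal is correct and follows essentially the same route as the paper: restrict to $t<\min\{\delta,L_1\}$ so that $h_j(t)=\beta_j t$, and then use the strict positivity of $\sum_{j=1}^m a_j\beta_j^k$ to force every Taylor coefficient of $\varphi$ at $0$ to vanish, concluding by analytic continuation. The only cosmetic difference is that the paper extracts the coefficients by differentiating $k$ times and evaluating at $0$, whereas you expand $\varphi$ in its power series and invoke the identity theorem; these are interchangeable.
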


\smallskip
The second identifiability  result  requires less regularity for $\varphi$, provided that a measurement on a sufficiently large
time interval is available.

\begin{theorem}\label{B_Tiden02} Let $\varphi:[0,L]\to \R$ be a given function 
satisfying
\beq
\Phi_m[\varphi](t)= 0 \quad\forall t\in[0,L_m].
\eeq
Then $\varphi\equiv 0$ in $[0,L].$ 
\end{theorem}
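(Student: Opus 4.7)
The plan is to show that every moment $\int_0^L\varphi(u)u^{n-1}\,du$ vanishes and then to conclude by the Weierstrass approximation theorem. First I would evaluate the hypothesis at the endpoints of the testing interval: at $t=0$ every $h_j(0)=0$, while at $t=L_m$ one has $h_j(L_m)=\min(L,\beta_jL_m)=L$, since $\beta_jL_m\ge\beta_mL_m=L$ for each $j$. Using $\sum_{j=1}^m a_j=1$, these two evaluations immediately give $\varphi(0)=0$ and $\varphi(L)=0$. The very same computation shows that $\Phi_m[\varphi](s)=\varphi(L)=0$ for every $s\ge L_m$, so the hypothesis can be extended to the whole half-line and $\Phi_m[\varphi]\equiv 0$ on $[0,\infty)$.

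Next I would multiply by $s^{n-1}$, for any integer $n\ge 1$, and integrate on $[0,\infty)$. The condition $\varphi(L)=0$ makes the integrand $\varphi(h_j(s))s^{n-1}$ vanish on $[L_j,\infty)$, so each of the $m$ integrals on the right is actually taken over the bounded interval $[0,L_j]$ and is finite as soon as $\varphi$ is bounded (which is the case in the setting of interest). After the change of variables $u=\beta_js$ one obtains
\[
0=\int_0^\infty \Phi_m[\varphi](s)\,s^{n-1}\,ds=\sum_{j=1}^m a_j\int_0^{L_j}\varphi(\beta_js)\,s^{n-1}\,ds=\Bigl(\sum_{j=1}^m a_j\beta_j^{-n}\Bigr)\int_0^L\varphi(u)u^{n-1}\,du.
\]
Since each $a_j$ and each $\beta_j$ is positive, the weight in parentheses is strictly positive, so $\int_0^L\varphi(u)u^{n-1}\,du=0$ for every $n\ge 1$.

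Finally, polynomials being dense in $C([0,L])$ by Weierstrass, the vanishing of all moments yields $\int_0^L\varphi(u)f(u)\,du=0$ for every $f\in C([0,L])$, whence $\varphi=0$ almost everywhere on $(0,L)$; since $\varphi=\int_0^{\cdot}\rho$ is continuous in the problem at hand, this upgrades to $\varphi\equiv 0$ on $[0,L]$. The only point in the argument requiring care is the finiteness and evaluation of the auxiliary integrals of the second step, but the observation that $\varphi(L)=0$ confines each $s$-integration to the compact interval $[0,L_j]$ resolves this at once; no deeper obstacle arises.
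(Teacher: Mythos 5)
Your argument is clean and genuinely different from the paper's: you reduce everything to the vanishing of the moments $\int_0^L\varphi(u)u^{n-1}\,du$ and invoke Weierstrass, whereas the paper works entirely pointwise, showing by a descending induction (via the scaling Lemmas \ref{B_Lemma01}--\ref{B_Lemma02}) that $\varphi$ vanishes on the successively larger intervals $[\beta_mL_{k-1},L]$, $k=m,m-1,\dots,1$. Your preliminary observations are all correct: $\varphi(0)=\varphi(L)=0$ from $t=0$ and $t=L_m$ together with $\sum a_j=1$; the extension of the identity to $[L_m,\infty)$; the reduction of each $j$-th integral to the compact interval $[0,L_j]$; and the computation $0=\bigl(\sum_j a_j\beta_j^{-n}\bigr)\int_0^L\varphi(u)u^{n-1}\,du$ with a strictly positive weight. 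For the headline application, Theorem \ref{B_Tiden02:02}, where $\varphi=\int_0^{\cdot}\rho$ with $\rho\in L^2(0,L)$ is continuous, your proof is complete and shorter than the paper's.

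The gap is one of generality, and it is not purely cosmetic. The theorem is stated for an \emph{arbitrary} function $\varphi:[0,L]\to\R$, with no measurability or integrability hypothesis; your second step does not even make sense for a non-measurable $\varphi$ (boundedness, which you invoke, does not supply measurability), and for a merely integrable $\varphi$ the Weierstrass step only yields $\varphi=0$ almost everywhere, which you then upgrade by importing continuity from ``the problem at hand'' rather than from the hypotheses. The paper's pointwise proof needs none of this, and the extra strength is actually used later: in the proof of Theorem \ref{B_Trec} the identifiability theorem is applied to the difference between $\varphi$ and the reconstructed function $\tilde\varphi$ of \eqref{B_TrecE00}, which is defined piecewise on the intervals $[\beta^{k}L,\beta^{k-1}L)$ and is not a priori continuous, so an almost-everywhere conclusion would not immediately give the pointwise identity \eqref{B_TrecE02}. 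A second, softer difference: the paper's induction localizes where $\varphi$ vanishes (interval by interval), which is what feeds the reconstruction scheme and the localized stability estimate of Theorem \ref{B_Tstab03}; your moment argument is global and provides no such localization. If you restrict the statement to continuous (or integrable, with an a.e.\ conclusion) $\varphi$, your proof stands as a valid and more elementary alternative; as a proof of the theorem as stated, it falls short.
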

 
The proof of  Theorem \ref{B_Tiden02} uses algebraic arguments and  it gives us an
idea on how the kernel could be reconstructed and also how one can envision 
a numerical algorithm. 
The corresponding identifiability results for the operator $I_m$ are as follows.
\begin{theorem}[Identifiability for analytic functions] \label{B_Tiden01:01}
If $\rho:[0,L]\to \R$ is an analytic function such that 
\beq
I_m[\rho](t)=0 \quad\forall t\in(0,\delta),
\eeq
for some $\delta>0$, then $\rho\equiv 0$ in $[0,L].$
\end{theorem}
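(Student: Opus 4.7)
The plan is to reduce this directly to Theorem \ref{B_Tiden01} via the change of variables already established in \eqref{rela:Phi:Im}. Recall that
\[
I_m[\rho](t) = J_0 F(c_0)\,\Phi_m[\varphi](\sqrt{t}), \qquad \varphi(x)=\int_0^x \rho(\tau)\,d\tau,
\]
so the hypothesis $I_m[\rho](t)=0$ for $t\in(0,\delta)$ translates, after setting $s=\sqrt{t}$ and dividing by the positive constant $J_0 F(c_0)$, into
\[
\Phi_m[\varphi](s) = 0 \qquad \forall s\in(0,\sqrt{\delta}).
\]

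First I would verify that $\varphi$ inherits analyticity from $\rho$ on $[0,L]$: the antiderivative of a real-analytic function is real-analytic, since termwise integration of its local power series on each disk of convergence produces another convergent power series with the same radius. Hence $\varphi$ satisfies the regularity hypothesis of Theorem \ref{B_Tiden01} on $[0,L]$.

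Next I would apply Theorem \ref{B_Tiden01} with the interval $(0,\delta')$ where $\delta'=\sqrt{\delta}>0$, which yields $\varphi\equiv 0$ on $[0,L]$. Differentiating, $\rho = \varphi' \equiv 0$ on $[0,L]$, which is the desired conclusion.

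There is essentially no obstacle here: the only point that deserves a line of justification is the preservation of analyticity under integration (which is standard), and that the passage $t\mapsto \sqrt{t}$ maps $(0,\delta)$ onto an interval of the form $(0,\delta')$ with $\delta'>0$, so the hypothesis of Theorem \ref{B_Tiden01} is genuinely satisfied on a neighborhood of $0$. All the substance of the result is already contained in Theorem \ref{B_Tiden01}; this corollary merely transports it from $\varphi$ back to $\rho$ via the identity \eqref{rela:Phi:Im}.
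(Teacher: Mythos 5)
Your proposal is correct and follows exactly the route the paper takes: the authors state that Theorem \ref{B_Tiden01:01} follows at once from Theorem \ref{B_Tiden01} by setting $\varphi(x)=\int_0^x\rho(\tau)\,d\tau$ and using \eqref{rela:Phi:Im}. Your additional remarks (analyticity of the antiderivative, the substitution $s=\sqrt{t}$ mapping $(0,\delta)$ onto $(0,\sqrt{\delta})$) simply make explicit the details the paper leaves implicit.
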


\begin{theorem}\label{B_Tiden02:02} Let $\rho:[0,L]\to \R$ be a given function in 
$L^2(0,L)$ such that  
\beq
I_m[\rho](t)=0 \quad\forall t\in[0,L_m^2].
\eeq
Then $\rho\equiv 0$ in $[0,L].$ 
\end{theorem}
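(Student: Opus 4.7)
The plan is to reduce Theorem~\ref{B_Tiden02:02} to Theorem~\ref{B_Tiden02} via the change of variable $t\mapsto\sqrt{t}$ that converts $I_m$ into $\Phi_m$.

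First I would invoke the identity \eqref{rela:Phi:Im}, namely
\[
I_m[\rho](t)=J_0F(c_0)\,\Phi_m[\varphi]\bigl(\sqrt{t}\bigr),\qquad \varphi(x)=\int_0^x\rho(\tau)\,d\tau.
\]
Since $\rho\in L^2(0,L)\subset L^1(0,L)$, the primitive $\varphi$ is well-defined, absolutely continuous on $[0,L]$ with $\varphi(0)=0$, so $\varphi:[0,L]\to\R$ is admissible as input to the operator $\Phi_m$. Substituting $s=\sqrt{t}$, the hypothesis $I_m[\rho](t)=0$ for all $t\in[0,L_m^2]$ is equivalent, via $J_0F(c_0)\ne 0$, to
\[
\Phi_m[\varphi](s)=0\qquad\forall s\in[0,L_m].
\]

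Next I would apply Theorem~\ref{B_Tiden02} directly to $\varphi$: its statement imposes no regularity on the argument, so it yields $\varphi\equiv 0$ on $[0,L]$. Differentiating in the distributional (or a.e.) sense then gives $\rho\equiv 0$ in $L^2(0,L)$, which is the conclusion.

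There is essentially no obstacle here beyond being careful that Theorem~\ref{B_Tiden02} genuinely applies in the generality needed: the primitive $\varphi$ built from an $L^2$ density is continuous on $[0,L]$ and vanishes at $0$, which is more than enough to feed it into the hypothesis-free statement of Theorem~\ref{B_Tiden02}. The substantive algebraic content lives entirely in Theorem~\ref{B_Tiden02}; Theorem~\ref{B_Tiden02:02} is then a two-line corollary combining \eqref{rela:Phi:Im} with the standard fact that an $L^2$ function whose primitive vanishes identically is zero almost everywhere.
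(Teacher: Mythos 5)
Your reduction is exactly the paper's own argument: the authors state that Theorem~\ref{B_Tiden02:02} follows at once from Theorem~\ref{B_Tiden02} by setting $\varphi(x)=\int_0^x\rho(\tau)\,d\tau$ and using \eqref{rela:Phi:Im}, which is precisely what you do. The proposal is correct and adds only the (harmless, and welcome) explicit checks that $J_0F(c_0)\neq 0$ and that the vanishing of the primitive forces $\rho=0$ almost everywhere.
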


Theorems~\ref{B_Tiden01:01} and  \ref{B_Tiden02:02} follow at once 
from  Theorems~\ref{B_Tiden01} and  \ref{B_Tiden02} by letting 
\[
\varphi (x) = \int_0^x \rho (\tau ) d\tau .
\] 
Let us now proceed to the continuity and stability results.

\begin{theorem} \label{B_Tcont02} Let $\varphi \in H^1(0,L)$ be a given function.  
Then there exists  a constant $\tilde C_1>0$ such that   
\beq
 \norm{\Phi_m[\varphi]}_{H^1(0,L_m)}\leq \tilde C_1 \norm{\varphi}_{H^1(0,L)},
\eeq
where $\tilde C_1$  depends only on $L,\beta_1 $ and $\beta_{m}$.
\end{theorem}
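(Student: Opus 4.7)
The plan is to reduce the estimate for $\Phi_m[\varphi]=\sum_{j=1}^m a_j\,\varphi\circ h_j$ to estimates on each summand $\varphi\circ h_j$, and then exploit the explicit structure $h_j(t)=\min\{L,\beta_j t\}$ via a change of variables on the growing piece $[0,L_j]$ together with the fact that $\varphi\circ h_j$ is constant on the plateau $[L_j,L_m]$. The whole argument is Minkowski's inequality plus a substitution, the only subtlety being the treatment of the plateau.

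First, I would split the interval: for every $j$, write $L_j=L/\beta_j$ (so $L_1\le\cdots\le L_m$) and observe that $h_j(t)=\beta_j t$ on $[0,L_j]$ and $h_j(t)=L$ on $[L_j,L_m]$. For the $L^2$-norm, Minkowski gives
\[
 \|\Phi_m[\varphi]\|_{L^2(0,L_m)}\le\sum_{j=1}^m a_j\,\|\varphi\circ h_j\|_{L^2(0,L_m)},
\]
and the change of variable $s=\beta_j t$ on $[0,L_j]$ together with the constant piece on $[L_j,L_m]$ yields
\[
 \|\varphi\circ h_j\|_{L^2(0,L_m)}^2=\beta_j^{-1}\|\varphi\|_{L^2(0,L)}^2+(L_m-L_j)|\varphi(L)|^2.
\]
The term $|\varphi(L)|$ is controlled by the continuous Sobolev embedding $H^1(0,L)\hookrightarrow C([0,L])$, which costs a constant depending only on $L$.

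Next, I would treat the weak derivative. Since $h_j$ is Lipschitz and piecewise affine, the chain rule in the distributional sense gives $(\varphi\circ h_j)'(t)=\beta_j\,\varphi'(\beta_j t)$ on $(0,L_j)$ and $(\varphi\circ h_j)'(t)=0$ on $(L_j,L_m)$, so a second change of variables produces
\[
 \|(\varphi\circ h_j)'\|_{L^2(0,L_m)}^2=\beta_j\,\|\varphi'\|_{L^2(0,L)}^2.
\]
This shows in particular that $\Phi_m[\varphi]\in H^1(0,L_m)$ (no jump at $t=L_j$ because $\varphi\circ h_j$ is continuous there).

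Combining these two bounds and using $\sum_j a_j=1$ together with the monotonicity $\beta_m\le\beta_j\le\beta_1$, I would estimate
\[
 \sum_{j=1}^m a_j\beta_j^{-1/2}\le\beta_m^{-1/2},\qquad \sum_{j=1}^m a_j\beta_j^{1/2}\le\beta_1^{1/2},\qquad L_m-L_j\le L/\beta_m,
\]
which collapses every dependence on the individual $(a_j,\beta_j)$ into a constant $\tilde C_1=\tilde C_1(L,\beta_1,\beta_m)$, as required. The only mildly delicate point is the plateau contribution $(L_m-L_j)|\varphi(L)|^2$, but that is harmless once one invokes the one-dimensional Sobolev trace $|\varphi(L)|\le C(L)\|\varphi\|_{H^1(0,L)}$; everything else is a routine scaling.
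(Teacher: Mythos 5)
Your proof is correct and follows essentially the same route as the paper's: Minkowski's inequality over the sum, a change of variables on $[0,L_j]$ plus the plateau term $(L_m-L_j)\varphi^2(L)$ controlled by the trace/Sobolev embedding, the distributional chain rule giving $(\varphi\circ h_j)'(t)=\beta_j\varphi'(\beta_j t)\mathbf{1}_{\{t<L_j\}}$, and the bounds $\sum_j a_j\beta_j^{-1/2}\le\beta_m^{-1/2}$, $\sum_j a_j\beta_j^{1/2}\le\beta_1^{1/2}$. The only cosmetic difference is that the paper computes the weak derivative of the whole sum $\Phi_m[\varphi]$ by integrating against test functions rather than differentiating each $\varphi\circ h_j$ separately, which changes nothing of substance.
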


We are now in a position to state our first main result. Firstly, we define the function 
\beq\label{B_DCons01}
\Lambda^\gamma_m(s)=\left| \sum_{j=1}^ma_j\beta_j^{-(\frac{1}{2}+\gamma-is)}\right|,
\eeq
where $i=\sqrt{-1}$ is the imaginary unit.

\begin{theorem}\label{B_Tstab02}
Let  
$\varphi \in C([0,L])$  be a given function. Then there exists a constant 
$\gamma_0\in\R$ such that for any $\gamma>\gamma_0$,   
\begin{equation}
C_\gamma\norm{\varphi(\cdot)-\varphi(L)}_{0,\gamma,L}\leq 
\norm{\Phi_m[\varphi](\cdot)-\Phi_m[\varphi](L_m)}_{0,\gamma,L_m},
\label{A1}
\end{equation}
where 
$$
C_\gamma:=\inf_{s\in\R}\Lambda^\gamma_m(s)>0.
$$ 
\end{theorem}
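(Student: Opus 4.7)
The proof uses the Mellin transform on the multiplicative group $(0,\infty)$, as announced in the introduction. It naturally splits into three steps.

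First, I reduce to the case $\varphi(L)=0$. Since $\beta_m=\min_j\beta_j$, for every $j$ one has $\beta_j L_m=\beta_j L/\beta_m\ge L$, so $h_j(L_m)=L$; combined with \eqref{B_H05} this gives $\Phi_m[\varphi](L_m)=\varphi(L)$. Because $\Phi_m$ is linear with $\Phi_m[1]=1$, we get $\Phi_m[\varphi-\varphi(L)](t)=\Phi_m[\varphi](t)-\Phi_m[\varphi](L_m)$, so replacing $\varphi$ by $\varphi-\varphi(L)$ I may assume $\varphi(L)=0$. Under this assumption I extend $\varphi$ by zero outside $[0,L]$ (the extension stays continuous). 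Then $\varphi(h_j(t))=\varphi(\beta_j t)$ for all $t>0$ (both sides vanish as soon as $\beta_j t>L$), and
\[
\Phi_m[\varphi](t)\;=\;\sum_{j=1}^m a_j\,\varphi(\beta_j t),\qquad t>0,
\]
is a function supported in $[0,L_m]$.

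Second, I apply the Mellin transform $\mathcal{M}[f](s)=\int_0^\infty f(x)\,x^{s-1}\,dx$. Because $\varphi$ is continuous and compactly supported, $\int_0^\infty|\varphi(x)|^2 x^{2\gamma}dx=\|\varphi\|_{0,\gamma,L}^2$ is finite as soon as $\gamma>-1/2$, and likewise $\int_0^\infty|\Phi_m[\varphi](t)|^2 t^{2\gamma}dt=\|\Phi_m[\varphi]\|_{0,\gamma,L_m}^2$. The Mellin--Plancherel formula along the vertical line $\mathrm{Re}(s)=\gamma+\tfrac12$ reads
\[
\|\varphi\|_{0,\gamma,L}^2\;=\;\frac{1}{2\pi}\int_{\mathbb{R}}\bigl|\mathcal{M}[\varphi](\tfrac12+\gamma+i\tau)\bigr|^2\,d\tau,
\]
with the analogous identity for $\Phi_m[\varphi]$. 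The scaling rule $\mathcal{M}[\varphi(\beta\,\cdot)](s)=\beta^{-s}\mathcal{M}[\varphi](s)$ yields
\[
\mathcal{M}[\Phi_m[\varphi]](s)\;=\;\Bigl(\sum_{j=1}^m a_j\,\beta_j^{-s}\Bigr)\,\mathcal{M}[\varphi](s),
\]
and at $s=\tfrac12+\gamma+i\tau$ the modulus of the multiplier is precisely $\Lambda_m^\gamma(-\tau)$. Since $\inf_\tau\Lambda_m^\gamma(-\tau)=C_\gamma$, integrating the pointwise bound $|\mathcal{M}[\Phi_m[\varphi]]|^2\ge C_\gamma^2|\mathcal{M}[\varphi]|^2$ along the line gives \eqref{A1}.

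Third, the genuine difficulty is to prove that $C_\gamma>0$ for $\gamma$ large. Factoring out the dominant term, I would write
\[
\Lambda_m^\gamma(s)\;=\;a_m\,\beta_m^{-(1/2+\gamma)}\Bigl|\,e^{is\log\beta_m}+\sum_{j<m}\frac{a_j}{a_m}\Bigl(\frac{\beta_m}{\beta_j}\Bigr)^{1/2+\gamma}e^{is\log\beta_j}\,\Bigr|,
\]
and observe that, since $\beta_m<\beta_j$ for $j<m$, each ratio $(\beta_m/\beta_j)^{1/2+\gamma}$ tends to $0$ as $\gamma\to\infty$. Choosing $\gamma_0$ large enough that $\sum_{j<m}(a_j/a_m)(\beta_m/\beta_j)^{1/2+\gamma_0}<1$, the reverse triangle inequality produces a strictly positive lower bound on $\Lambda_m^\gamma(s)$ uniform in $s\in\mathbb{R}$, for every $\gamma>\gamma_0$. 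I expect this positivity step to be the main obstacle: for small $\gamma$, $\Lambda_m^\gamma$ is an almost-periodic trigonometric sum that may genuinely vanish on the real line, and only the dominance of the $\beta_m$-term (valid for $\gamma$ sufficiently large) rules this out. The Mellin--Plancherel computation itself is routine once the reduction to $\varphi(L)=0$ and the zero extension are in place.
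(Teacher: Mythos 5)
Your proposal is correct and follows essentially the same route as the paper: reduction to $\varphi(L)=0$ via $\Phi_m[1]=1$, zero extension so that $\Phi_m[\varphi](t)=\sum_j a_j\varphi(\beta_j t)$, the Mellin scaling rule producing the multiplier $\sum_j a_j\beta_j^{-s}$, Plancherel on the line $\mathrm{Re}(s)=\gamma+\tfrac12$ (the paper phrases this as the isometry $\tilde{\mathcal{M}}$ applied to $x^\gamma\tilde\varphi$), and positivity of $C_\gamma$ for large $\gamma$ by making the $\beta_m$-term dominant via the reverse triangle inequality. The only cosmetic difference is that the paper bounds the error sum by $\beta_{m-1}^{-(\gamma+1/2)}$ using $\sum_{j<m}a_j\le 1$ rather than keeping the full sum $\sum_{j<m}(a_j/a_m)(\beta_m/\beta_j)^{\gamma+1/2}$, which yields the explicit threshold $\gamma_0=\ln(a_m)/\ln(\beta_m/\beta_{m-1})-\tfrac12$.
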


It is worth noting that \eqref{A1} can be viewed as an inverse inequality of  \eqref{cont:PhiLp} for $p=2$ and for functions $\varphi \in
\{ f\in C( [ 0,L ]); \  f(L)=0 \}$, 
and it can also be regarded as a stability estimate
for the functional $\Phi_m$. Its proof involves some  properties of Mellin transform.
Hereafter, we refer to $\gamma_0$ as the smallest number such that
  $$C_\gamma>0,\;\;\forall\gamma>\gamma_0.$$
  
\smallskip

Next,  we present a continuity result for the operator $I_m.$  
\begin{theorem}\label{B_TCont01} Let $\rho:[0,L]\to\R$ be a function in $L^2(0,L)$. Then,  for $ \gamma\geq \frac 3 4$   there exists  a positive constant ${C}_1>0$ such that 
\beq\label{B_Tcont01E00}
\norm{I_m[\rho]}_{1,\gamma,L_m^2}\leq  C_1\norm{\rho}_{L^2(0,L)},
\eeq
where $ C_1$  depends only on $L,\alpha_1,\alpha_{m-1},\alpha_{m},a_m$ and $\gamma.$
\end{theorem}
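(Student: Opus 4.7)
The plan is to reduce the weighted $H^1$ estimate on $(0,L_m^2)$ to the unweighted $H^1$ continuity already established in Theorem~\ref{B_Tcont02}, by exploiting the factorization~\eqref{rela:Phi:Im},
\[
I_m[\rho](t)=J_0F(c_0)\,\Phi_m[\varphi](\sqrt t),\qquad \varphi(x):=\int_0^x\rho(\tau)\,d\tau,
\]
together with the change of variable $u=\sqrt t$. The weight $\sigma_\gamma(t)=t^\gamma$ is tuned so that the Jacobian $2u\,du$ and the factor $1/(2\sqrt t)$ produced by differentiating $\sqrt t$ are absorbed into honest powers of $u$; the threshold $\gamma=3/4$ will emerge from the integrability at $u=0$ of the worst such power.

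First I would observe that $\varphi(0)=0$ and $\varphi'=\rho$, which together with $|\varphi(x)|\le\sqrt x\,\|\rho\|_{L^2(0,L)}$ yield $\|\varphi\|_{H^1(0,L)}\le\sqrt{L^2+1}\,\|\rho\|_{L^2(0,L)}$. Applying Theorem~\ref{B_Tcont02} then delivers, with $g:=\Phi_m[\varphi]$,
\[
\|g\|_{H^1(0,L_m)}\le C\,\|\rho\|_{L^2(0,L)}.
\]

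Next I would write $\sigma_\gamma(t)I_m[\rho](t)=J_0F(c_0)\,t^\gamma g(\sqrt t)$ and differentiate to obtain
\[
\bigl(t^\gamma g(\sqrt t)\bigr)'=\gamma t^{\gamma-1}g(\sqrt t)+\tfrac12 t^{\gamma-1/2}g'(\sqrt t).
\]
Substituting $u=\sqrt t$, $dt=2u\,du$, in each of the three resulting integrals turns $\|I_m[\rho]\|_{1,\gamma,L_m^2}^2$ (up to the multiplicative factor $(J_0F(c_0))^2$) into a linear combination of
\[
\int_0^{L_m}u^{4\gamma+1}|g(u)|^2\,du,\quad \int_0^{L_m}u^{4\gamma-3}|g(u)|^2\,du,\quad \int_0^{L_m}u^{4\gamma-1}|g'(u)|^2\,du.
\]
Under the hypothesis $\gamma\ge 3/4$ the three exponents $4\gamma+1$, $4\gamma-3$, $4\gamma-1$ are all nonnegative, so each factor $u^\alpha$ is bounded on $(0,L_m)$ by $L_m^\alpha$, and the three integrals are controlled by a constant multiple of $\|g\|_{H^1(0,L_m)}^2$. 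Combined with the bound from the previous paragraph this yields $\|I_m[\rho]\|_{1,\gamma,L_m^2}\le C_1\|\rho\|_{L^2(0,L)}$.

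The only delicate point is the middle integral $\int_0^{L_m}u^{4\gamma-3}|g(u)|^2\,du$: because a generic $g\in H^1(0,L_m)$ does not vanish at $u=0$, no additional vanishing of $g$ is available, so the exponent $4\gamma-3$ is forced to be nonnegative, which is exactly the hypothesis $\gamma\ge 3/4$. The finer bookkeeping that makes $C_1$ depend only on $L$, $\alpha_1$, $\alpha_{m-1}$, $\alpha_m$, $a_m$ and $\gamma$, rather than on every $\alpha_j$, can be obtained by splitting $g'(u)=\sum_{j:\beta_j u<L}a_j\beta_j\rho(\beta_j u)$ at the threshold $u=L/\beta_{m-1}$: above that value only the $j=m$ summand survives and contributes an explicit bound through $a_m$ and $\beta_m$, while below it a crude $L^2$-bound involving the largest scaling $\beta_1$ suffices. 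The relation $\beta_j=2\sqrt D\,\textrm{erfc}^{-1}(\alpha_j/c_0)$ then translates these dependencies into the stated ones on $\alpha_1$, $\alpha_{m-1}$ and $\alpha_m$.
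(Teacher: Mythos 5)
Your proposal is correct and follows essentially the same route as the paper: factor $I_m[\rho](t)=J_0F(c_0)\Phi_m[\varphi](\sqrt t)$, bound $\norm{\varphi}_{H^1(0,L)}$ by $\sqrt{L^2+1}\,\norm{\rho}_{L^2(0,L)}$, invoke Theorem~\ref{B_Tcont02}, and use the substitution $u=\sqrt t$ so that the condition $\gamma\ge 3/4$ makes all resulting weight exponents (in particular $4\gamma-3$) nonnegative and hence bounded on $(0,L_m)$. The only cosmetic difference is that the paper routes the zeroth-order terms back through $\norm{\Phi_m[\varphi]}_{0,2\gamma-3/2,L_m}$ before bounding by the unweighted $H^1$ norm, whereas you convert all three integrals directly; the substance is identical.
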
 

Besides,  we present a stability result for the operator $I_m.$ 

\begin{theorem}\label{B_Tstab01} Let $\rho:[0,L]\to\R$ be a function in $L^2(0,L)$. Then,   for 
any  $\gamma>\max\{\gamma_0,3/4\}$, there exists  a positive constant $C_2>0$ such that
\beq\label{B_Estab01}
\norm{\rho}_{-1,\gamma+1,L}\leq  C_2 \norm{I_m[\rho]}_{1,\frac{\gamma}{2}-\frac 1 4,L_m^2},
\eeq
where $ C_2$  depends only on  $L,C_\gamma>0$ and $\gamma.$
\end{theorem}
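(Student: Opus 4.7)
The natural approach is to reduce Theorem \ref{B_Tstab01} to the stability estimate for $\Phi_m$ in Theorem \ref{B_Tstab02}, using the primitive $\varphi(x) = \int_0^x \rho(\tau)\,d\tau$ as a bridge. Since $\rho \in L^2(0,L)$, the function $\varphi$ lies in $H^1(0,L) \cap C([0,L])$ with $\varphi' = \rho$; by \eqref{rela:Phi:Im}, $I_m[\rho](t) = J_0 F(c_0)\,\Phi_m[\varphi](\sqrt{t})$, and since $h_j(L_m) = L$ for every $j$ and $\sum_j a_j = 1$, we also have $\Phi_m[\varphi](L_m) = \varphi(L)$, so $I_m[\rho](L_m^2) = J_0 F(c_0)\,\varphi(L)$. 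For $\gamma > \gamma_0$, Theorem \ref{B_Tstab02} applied to $\varphi$ then yields
\[
C_\gamma\,\|\varphi - \varphi(L)\|_{0,\gamma,L} \leq \|\Phi_m[\varphi] - \varphi(L)\|_{0,\gamma,L_m},
\]
and the remainder of the proof consists in bounding the left-hand side from below by $\|\rho\|_{-1,\gamma+1,L}$ and the right-hand side from above by $\|I_m[\rho]\|_{1,\gamma/2-1/4,L_m^2}$.

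For the lower bound, I would exploit the fact that $\rho = (\varphi - \varphi(L))'$ in the distributional sense and apply the Leibniz rule:
\[
x^{\gamma+1}\rho(x) = \frac{d}{dx}\bigl(x^{\gamma+1}(\varphi(x) - \varphi(L))\bigr) - (\gamma+1)\,x^\gamma (\varphi(x) - \varphi(L)).
\]
Since differentiation is continuous from $L^2(0,L)$ into $H^{-1}(0,L)$ and since $|x|^{\gamma+1} \leq L|x|^\gamma$ on $(0,L)$, this produces $\|\rho\|_{-1,\gamma+1,L} \leq C(L,\gamma)\,\|\varphi - \varphi(L)\|_{0,\gamma,L}$. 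For the upper bound, I would perform the change of variables $s = \sqrt{t}$, under which $\int_0^{L_m} s^{2\gamma}(\cdots)\,ds$ transforms into $\tfrac{1}{2}\int_0^{L_m^2} t^{\gamma - 1/2}(\cdots)\,dt$. This yields the identity
\[
\|\Phi_m[\varphi] - \varphi(L)\|_{0,\gamma,L_m} = \frac{1}{\sqrt{2}\,J_0 F(c_0)}\,\|I_m[\rho] - I_m[\rho](L_m^2)\|_{0,\gamma/2-1/4,L_m^2}.
\]
By the triangle inequality, the right-hand side is at most $\|I_m[\rho]\|_{0,\gamma/2-1/4,L_m^2} + |I_m[\rho](L_m^2)|\,\|\sigma_{\gamma/2-1/4}\|_{L^2(0,L_m^2)}$; the first summand is trivially dominated by $\|I_m[\rho]\|_{1,\gamma/2-1/4,L_m^2}$, and the boundary value in the second is estimated by applying the one-dimensional Sobolev embedding $H^1 \hookrightarrow L^\infty$ to $g(t) := t^{\gamma/2-1/4} I_m[\rho](t)$, which gives $|I_m[\rho](L_m^2)| = (L_m^2)^{-(\gamma/2-1/4)}|g(L_m^2)| \leq C\,\|I_m[\rho]\|_{1,\gamma/2-1/4,L_m^2}$.

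The main technical point will be keeping the weight exponents straight through the change of variables and checking that the Sobolev embedding step is legitimate --- this is where the hypothesis $\gamma > 3/4$ enters, ensuring that the weighted integrals converge at $t = 0$ and that $g$ is a genuine element of $H^1(0,L_m^2)$. Once these estimates are assembled and chained with Theorem \ref{B_Tstab02}, one obtains the stated inequality with a constant $C_2$ depending only on $L$, $\gamma$, and $C_\gamma$.
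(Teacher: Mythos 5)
Your proposal is correct and follows essentially the same route as the paper's proof: reduce to Theorem \ref{B_Tstab02} via $\varphi(x)=\int_0^x\rho$, bound $\norm{\rho}_{-1,\gamma+1,L}$ by $(L+\gamma+1)\norm{\varphi-\varphi(L)}_{0,\gamma,L}$ through the identity $(x^{\gamma+1}\tilde\varphi)'=(\gamma+1)x^\gamma\tilde\varphi+x^{\gamma+1}\rho$, and convert the $\Phi_m$-norm into the $I_m$-norm via the substitution $s=\sqrt{t}$ (the paper's \eqref{B_Tcont01E02}) together with a trace estimate for the boundary value $I_m[\rho](L_m^2)$. The only cosmetic difference is that you apply the triangle inequality after changing variables to $t$ while the paper applies it before, which is immaterial.
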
 

Theorems \ref{B_TCont01} and \ref{B_Tstab01}  are consequences
of Theorems \ref{B_Tcont02} and \ref{B_Tstab02}, respectively.

\smallskip
Even if the proof of Theorem~\ref{B_Tiden02} is provided for any choice of the partition 
$\{\alpha_j\}_{j=1}^m$  of $[0,c_0]$, its proof can be  considerably simplified in the special case when 
\beq\label{B_Halpha02}
\alpha_j=c_0\textrm{erfc}\left(\frac{\beta_0\beta^j}{2\sqrt{D}}\right) \quad j=1,...,m,
\eeq
with $\beta \in(0,1)$ and $\beta_0>0$  constants. Note that the corresponding mesh is non-regular. 

In what follows, $I_m$ and  $\Phi_m$  are denoted by $ \tilde I_m$ and 
$\tilde\Phi_m$, respectively, when $\alpha_j$ is given by \eqref{B_Halpha02}.

\smallskip
For the reconstruction, we  introduce the function  
\beq \label{B_Dfun:g}
g(t)=\frac{\tilde{I}_m[\rho](t^2/\beta_0^2)-
\tilde I_m[\rho](L_m^2)}{J_0F(c_0)} \quad \forall t\in\left[0, \beta_0L_m\right).
\eeq
As mentioned in the Introduction, we look for a reconstruction algorithm and 
a numerical scheme to recover function $\rho$ from the measurement of  
$\tilde I_m[\rho].$ We begin by recovering $\tilde\varphi:[0,L]\to\R$, which satisfies 
\beq
\tilde\Phi_m[\tilde\varphi](t/\beta_0)=g(t),\;\;\;\forall t\in[0,\beta_0L_m).
\eeq
Next, we define functions $\varphi_1,\varphi_2,...,\varphi_m$ by means of the 
following induction formulae: 
\beq\label{B_Dfunc:g1}
\varphi_1(x)=\left\{
\begin{array}{cc}
\D \frac{1}{a_m}g\left(\frac{x}{\beta^m}\right),&
\textrm{if } x \in[\beta L,L),\\ \\
0, & \textrm{otherwise,} 
\end{array}\right.
\eeq
and 
\beq\label{B_Dfunc:gj01}
\varphi_{k+1}(x)=\left\{\begin{array}{cc}
\D\frac{1}{a_m}\left(g\left(\frac{x}{\beta^m}\right)-\sum_{j=1}^k a_{m-k-1+j} \varphi_{j}\left(\frac{\beta^{j} x}{\beta^{k+1}}\right)\right), &\textrm{if } x\in[\beta^{k+1}L,\beta^k L), \\ \\
0,& \textrm{otherwise,}
\end{array}\right.
\eeq
for $k=1,..,m-1.$ 
Furthermore for $k\geq m$, we define 
\beq\label{B_Dfunc:gj02}
\varphi_{k+1}(x)=\left\{\begin{array}{cc}
\D\frac{1}{a_m}\left(g\left(\frac{x}{\beta^m}\right)-\sum_{j=1}^{m-1} a_{j} \varphi_{j+k-m+1}\left(\frac{\beta^{j} x}{\beta^{m}}\right)\right), &\textrm{if } x\in[\beta^{k+1}L,\beta^k L), \\ \\
0,& \textrm{otherwise.}
\end{array}\right.
\eeq

With the above definitions we have the following reconstruction result: 
\begin{theorem}\label{B_Trec} 
Let $\rho$ be a function in $C^0([0,L]),$  let  $g$  be defined as in  \eqref{B_Dfun:g}, and  let $\{\varphi_j\}_{j\geq 1}$ be given by  \eqref{B_Dfunc:g1}-\eqref{B_Dfunc:gj02}. Then  the function $\widetilde{\varphi}$ defined by 
\beq\label{B_TrecE00}
\tilde\varphi(x)=\left\{
\begin{array}{lcl}
\D \sum_{j=1}^{+\infty}\varphi_j(x), & & \textrm{ if } x\in(0,L], \\ \\
g(0), & & \textrm{ if } x=0,
\end{array}\right.
\eeq
is well defined and satisfies 
 \beq\label{B_TrecE01}
 \tilde \Phi_m[\tilde\varphi](t/\beta_0)=g(t) \quad\forall t\in[0,\beta_0L_m].
 \eeq
 Furthermore, $\rho$ satisfies   
 \beq \label{B_TrecE02}
 \int_0^x\rho(z)dz= \tilde\varphi(x)+\frac{\tilde I_m[\rho](L_m^2)}{J_0F(c_0)} \quad\forall x\in[0,L].
 \eeq
\end{theorem}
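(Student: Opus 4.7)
The plan is to verify, in order, (i) well-definedness of $\tilde\varphi$ as defined by \eqref{B_TrecE00}; (ii) the functional equation \eqref{B_TrecE01}; and (iii) the reconstruction formula \eqref{B_TrecE02}, which follows from (ii) together with the identifiability result of Theorem~\ref{B_Tiden02}.

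For (i), the recursive definitions \eqref{B_Dfunc:g1}--\eqref{B_Dfunc:gj02} force $\varphi_j$ to be supported in the half-open interval $[\beta^jL,\beta^{j-1}L)$. Since these intervals are pairwise disjoint and their union is $(0,L)$, for each $x\in(0,L]$ the series $\sum_j\varphi_j(x)$ reduces to a single term, so $\tilde\varphi$ is unambiguously defined on $(0,L]$ with $\tilde\varphi(L)=0$, while its value at the origin is prescribed separately.

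For (ii), the choice \eqref{B_Halpha02} gives $\beta_j=\beta_0\beta^j$, whence $h_j(t/\beta_0)=\min\{L,\beta^jt\}$ and in particular $h_j(L_m)=L$ for every $j$; therefore $\tilde\varphi(L)=0$ kills every contribution coming from indices $j$ with $\beta^jt\ge L$. I would split the range $t\in[0,\beta_0L_m]$ into the pieces $[L/\beta^l,L/\beta^{l+1})$ for $l=0,\ldots,m-1$, together with the sub-pieces $[L\beta^r,L\beta^{r-1})$ for $r\ge 1$ that make up $[0,L)$. On each piece, setting $x=\beta^mt$, exactly one index $k+1$ (namely $k=m-1-l$ in the first regime and $k=m-1+r$ in the second) places $x$ in the support of $\varphi_{k+1}$, while every other nonzero term $\tilde\varphi(\beta^jt)$ equals $\varphi_{j'}(\beta^{j'}x/\beta^{k+1})$ or $\varphi_{j+k-m+1}(\beta^{j}x/\beta^{m})$ for some previously defined $\varphi_{j'}$. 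Solving $\tilde\Phi_m[\tilde\varphi](t/\beta_0)=g(t)$ for the top term then reproduces exactly the defining formulas \eqref{B_Dfunc:gj01} and \eqref{B_Dfunc:gj02}, so \eqref{B_TrecE01} holds by construction; the boundary case $t=0$ is built in via the separate definition $\tilde\varphi(0)=g(0)$.

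For (iii), \eqref{rela:Phi:Im} rewrites $g$ as $g(t)=\tilde\Phi_m[\varphi](t/\beta_0)-\tilde\Phi_m[\varphi](L_m)$ with $\varphi(x)=\int_0^x\rho(\tau)\,d\tau$, and the same observation $h_j(L_m)=L$ yields $\tilde\Phi_m[\varphi](L_m)=\varphi(L)=\tilde I_m[\rho](L_m^2)/(J_0F(c_0))$. Hence $g(t)=\tilde\Phi_m[\varphi-\varphi(L)](t/\beta_0)$, which combined with \eqref{B_TrecE01} gives $\tilde\Phi_m[\tilde\varphi-\varphi+\varphi(L)](s)=0$ for all $s\in[0,L_m]$; Theorem~\ref{B_Tiden02} then forces $\tilde\varphi=\varphi-\varphi(L)$, and rearranging yields \eqref{B_TrecE02}. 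The hard part is the bookkeeping in (ii): the two recursion regimes must be matched to the correct sub-ranges of $t$, and each shifted argument $\beta^jt$ identified with the right support interval $[L\beta^{j'},L\beta^{j'-1})$; once that index correspondence is pinned down, the verification is just an algebraic unwinding of the definitions.
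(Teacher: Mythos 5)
Your proposal is correct and follows essentially the same route as the paper's proof: disjointness of the supports $[\beta^jL,\beta^{j-1}L)$ for well-definedness, a case split between $t<L$ and $t\ge L$ (your sub-ranges $[L\beta^r,L\beta^{r-1})$ and $[L/\beta^l,L/\beta^{l+1})$, with the index correspondences $k=m-1+r$ and $k=m-1-l$ matching the paper's $k+1=m+k^\ast(t)$ and $k+1=m-k_\ast(t)$), and finally linearity, $\tilde\Phi_m[1]=1$ and Theorem~\ref{B_Tiden02} to pass from \eqref{B_TrecE01} to \eqref{B_TrecE02}. The telescoping of the recursion against $\tilde\Phi_m[\tilde\varphi](t/\beta_0)=\sum_j a_j\tilde\varphi(\beta^jt)$ that you outline is exactly the computation the paper carries out explicitly.
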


Theorem \ref{B_Trec}  provides an {\em explicit} reconstruction procedure for both operators 
$\tilde\Phi_m$ and $\tilde I_m$ and therefore a numerical algorithm for the reconstruction.

\begin{remark}
Theorem \ref{B_Trec} allows the recovery of $\varphi$, solution of  
\eqref{B_TrecE01}, without any restriction about $g$.  
If another mesh is substituted to the mesh given in \eqref{B_Halpha02}, the recovery of 
$\varphi$ imposes to do some assumptions about $g$. 
\end{remark} 

\smallskip
The previous reconstruction procedure gives us  the possibility to obtain a 
sharper stability result. We shall provide a stability result for 
$\tilde\Phi_m$ in terms of a quite general norm. 

\smallskip
We consider a family of norms  
$\norm{\cdot}_{[a,b)}$ for (some) functions $f:[a,b)\to \R $, where $0\le a< b<\infty$, that enjoys the following properties:
\begin{itemize}
\item[(i)] $\norm{f}_{[a,b)} <\infty$ for any $f\in W^{1,1}(a,b)$; 
\item[(ii)] If $[a_1,b_1)\subset [a,b)$, then   
\beq\label{B_Dnorm01}
\norm{f}_{[a_1,b_1)}\leq  \norm{f}_{[a,b)};
\eeq
\item[(iii)] For any $\lambda>0$, there exists a positive constant $C(\lambda)$ such that 
\beq\label{B_Dnorm02}
\norm{g_\lambda}_{[\lambda a,\lambda b)}\leq C(\lambda)\norm{f}_{[a,b)},
\eeq
where $g_\lambda(x)=f(x/\lambda),$ and $C(\cdot)$ is a nondecreasing function with $C(1)=1$.  
\end{itemize}

\smallskip
A natural family of norms fulfilling (i), (ii), and (iii), is those  of $L^p$ norms, where $1\le p\le +\infty$. Indeed, (i) and (ii) are obvious, and 
(iii) holds with 
$$
C(\lambda)=\left\{\begin{array}{cl}
\lambda^{\frac{1}{p}}& \textrm{ if } p\in [1,+\infty),\\
1&\textrm{ if } p=\infty.
\end{array}\right.
$$ 
Another family of norms fulfilling (i), (ii), and (iii), is the family of BV-norms: 
\beq
\norm{f}_{BV(a,b)}=\norm{f}_{L^\infty(a,b)}+
\sup_{a\leq x_1< \cdot\cdot\cdot< x_k<b}\sum_{j=1}^k\left|f(x_k)-f(x_{k-1}) \right| . 
\eeq
Here, we can pick $C(\lambda)=1$. (Note that $W^{1,1}(a,b)\subset BV(a,b)$, see e.g. \cite{EG}.)
These kinds of norms are adapted to functions with low regularity, as e.g.  step functions. 
The second main result in this paper is the following stability result.
\begin{theorem}\label{B_Tstab03} 
Let $\rho\in C^0([0,L])$ be a function and let a family of norms 
satisfying conditions (i), (ii) and (iii).
Then, we have  for all $k\ge 0$  
\beq\label{B_Tstab03E00}
\norm{\varphi(\cdot)-\varphi(L)}_{[\beta^{k+1}L,\beta^kL)}\leq 
C(\beta_0)\frac{C(\beta^m)}{a_m^{k+1}}\norm{\tilde\Phi_m[\varphi](\cdot)-
\tilde\Phi_m[\varphi](L_m)}_{[\beta^{k+1}L_m,L_m)},
\eeq
where $\displaystyle \varphi(x)=\int_0^x\rho(\tau)d\tau.$
\end{theorem}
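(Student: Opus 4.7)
The plan is to exploit the explicit reconstruction formulas of Theorem~\ref{B_Trec} and prove the bound by induction on $k$, propagating the estimate one cell of the geometric partition at a time, while tracking the extra factor $1/a_m$ generated at each step.

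\smallskip
\textbf{Setup.} Let $\tilde\varphi(x)=\varphi(x)-\varphi(L)$ and $G(t)=\tilde\Phi_m[\varphi](t)-\tilde\Phi_m[\varphi](L_m)$, so that the function $g$ of \eqref{B_Dfun:g} satisfies $g(t)=G(t/\beta_0)$. Theorem~\ref{B_Trec} gives $\tilde\varphi=\sum_{j\ge1}\varphi_j$, and since each $\varphi_j$ is supported on $[\beta^jL,\beta^{j-1}L)$, one has $\varphi(x)-\varphi(L)=\varphi_{k+1}(x)$ on $[\beta^{k+1}L,\beta^kL)$. Writing $T_{k+1}:=\norm{\varphi_{k+1}}_{[\beta^{k+1}L,\beta^kL)}$ and $S_k:=\norm{G}_{[\beta^{k+1}L_m,L_m)}$, the statement \eqref{B_Tstab03E00} reduces to proving $T_{k+1}\le C(\beta_0)C(\beta^m)a_m^{-(k+1)}S_k$ for every $k\ge 0$.

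\smallskip
\textbf{Base case and induction.} For $k=0$, \eqref{B_Dfunc:g1} together with two successive applications of axiom (iii) --- first with $\lambda=\beta^m$, transferring $g(\cdot/\beta^m)$ to $g$, then with $\lambda=\beta_0$, transferring $g$ to $G$ --- yields $T_1\le C(\beta^m)C(\beta_0)a_m^{-1}S_0$, which is the required bound. For the inductive step I distinguish $1\le k\le m-1$ (where \eqref{B_Dfunc:gj01} applies) from $k\ge m$ (where \eqref{B_Dfunc:gj02} applies). In both ranges, the triangle inequality combined with the same pair of applications of (iii) bounds the $g$-term by $C(\beta^m)C(\beta_0)S_k$. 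Each $\varphi_j$-term carries an argument shift by some $\beta^p\in(0,1)$ whose image under (iii) exactly matches the support of the corresponding $\varphi_{(\cdot)}$; the associated scaling constant $C(\beta^p)$ is $\le C(1)=1$ since $C$ is nondecreasing, so the inductive hypothesis together with axiom (ii) (used to enlarge the $G$-interval up to $[\beta^{k+1}L_m,L_m)$) controls these terms.

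\smallskip
\textbf{Reduction to a combinatorial inequality.} After collecting all the estimates and dividing through by $C(\beta_0)C(\beta^m)S_k/a_m^{k+1}$, the inductive step reduces to the two arithmetic bounds
\[
a_m^{k}+\sum_{i=0}^{k-1}a_{m-1-i}\,a_m^{i}\le 1\quad(1\le k\le m-1),\qquad a_m^{k}+\sum_{j=1}^{m-1}a_j\,a_m^{m-1-j}\le 1\quad(k\ge m).
\]
Both follow from $\sum_{j=1}^m a_j=1$ and $a_m\in(0,1)$: bound every $a_m^i$ with $i\ge 0$ by $1$ and the leading $a_m^{k}$ (for $k\ge 1$) by $a_m$, and recognize the resulting expression as a subsum of $\sum_{j=1}^m a_j$. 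The main obstacle is the bookkeeping in the inductive step: aligning each argument shift $\beta^jx/\beta^{k+1}$ or $\beta^jx/\beta^{m}$ with the support of the corresponding $\varphi_{(\cdot)}$ so that axiom (iii) applies with admissible parameters, and identifying the precise combinatorial inequality above, which absorbs exactly one additional factor $1/a_m$ per inductive step.
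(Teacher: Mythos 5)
Your proposal is correct and follows essentially the same route as the paper: reduce to $\norm{\varphi_{k+1}}_{[\beta^{k+1}L,\beta^kL)}$ via the support decomposition of Theorem~\ref{B_Trec}, induct on $k$ using the recursion formulas \eqref{B_Dfunc:g1}--\eqref{B_Dfunc:gj02} together with axioms (ii)--(iii) and $C(\lambda)\le 1$ for $\lambda\le 1$, and close the induction with exactly the subsum-of-$\sum a_j$ inequalities you identify (the paper discards the factors $a_m^{k-j}$, resp.\ $a_m^{m-1-j}$, slightly earlier than you do, but the bookkeeping is the same).
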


Theorem \ref{B_Tstab03} 
shows in particular that the value of $\varphi$ in the interval 
$[\beta^{k+1}L,\beta^k L)$ depends on the value of $\tilde \Phi_m[\varphi]$ 
in the interval $[\beta^{k+1}L_m,L_m)$, a property which is closely related to the nature 
of the reconstruction procedure.  

\section{Proof of identifiability results}
\
\par

This section is devoted to proving the identifiability results for the operator
$\Phi_m$. 

\begin{proof}[\bf Proof of Theorem \ref{B_Tiden01}]

\smallskip
 Let $\varphi$ be an analytic function such that
$$
\Phi_m[\varphi](t)= \sum_{j=1}^n a_j\varphi(h_j(t))=0 \quad\forall t\in(0,\delta).
$$

Then, taking  $t\in(0,\min\{\delta,L_1\})$ and using the fact that 
\beq\label{B_Erela:L}
L_0<L_1<\cdot\cdot\cdot<L_m,
\eeq
we see that $h_j(t)=\beta_j t,$ $j=1,...,m.$ Then, we have
$$ 
\sum_{j=1}^m a_j\varphi(\beta_j t)=0, \qquad t\in (0,\min \{ \delta , L_1\} ).
$$ 

If we derive the above expression and evaluate it at zero, we obtain 
$$
\varphi^{(k)}(0)\left(\sum_{j=1}^m a_j (\beta_j )^k\right)=0 \quad\forall k\geq 0,
$$
where $\varphi^{(k)}(0)$ denotes the  $k-$th derivative of $\varphi$ at zero. 
Since $a_j,\beta_j$ are positive, we have that $\sum_{j=1}^m a_j (\beta_j )^k> 0;$  
therefore $\varphi^{(k)}(0)=0$  for all $ k\geq 0$, and hence $\varphi\equiv 0$. 
This proves the identifiability for $\Phi_m$ in the case of analytic functions.
\end{proof}

\smallskip
To prove Theorem \ref{B_Tiden02}, we need  some technical  lemmas. 

\begin{lemma}\label{B_Lemma01}
Let $f,g:[0,L]\to \R$ be  functions, and let $s,\alpha_0\in[0,1)$ and $\lambda\in(0,1)$ 
be numbers such that 
\beq\label{B_E01lemma01}
f(\tau)+g(\lambda\tau)=0 \quad\forall \tau\in[sL,L),
\eeq
and 
\beq\label{B_E02lemma01}
f(\tau) = 0 \quad\forall \tau\in[\alpha_0L,L).
\eeq
Then 
\beq\label{B_E03lemma01}
g(\tau)=0 \quad\forall \tau\in[\alpha_1L,\lambda L),
\eeq
where $\alpha_1=\lambda\max\{s,\alpha_0\}.$
\end{lemma}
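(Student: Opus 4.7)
The plan is to reduce the claim to a straightforward change of variables in the hypothesis \eqref{B_E01lemma01}, combined with the vanishing property \eqref{B_E02lemma01}.

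First I would fix an arbitrary $\tau \in [\alpha_1 L, \lambda L)$ with $\alpha_1 = \lambda \max\{s,\alpha_0\}$, and set $\tau' = \tau/\lambda$. Since $\lambda \in (0,1)$ and $\tau < \lambda L$, we immediately get $\tau' < L$. Moreover, from $\tau \ge \alpha_1 L = \lambda \max\{s,\alpha_0\} L$ we deduce
\[
\tau' = \tau/\lambda \ge \max\{s,\alpha_0\}\, L,
\]
so in particular $\tau' \in [sL,L)$ and $\tau' \in [\alpha_0 L,L)$ simultaneously.

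Next I would apply \eqref{B_E01lemma01} at $\tau'$ to obtain $f(\tau') + g(\lambda \tau') = 0$, i.e.\ $f(\tau') + g(\tau) = 0$. Since $\tau' \in [\alpha_0 L, L)$, hypothesis \eqref{B_E02lemma01} gives $f(\tau') = 0$, and therefore $g(\tau) = 0$. As $\tau$ was arbitrary in $[\alpha_1 L, \lambda L)$, conclusion \eqref{B_E03lemma01} follows.

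There is no serious obstacle here: the proof is bookkeeping with the constants. The only point that requires a moment's care is checking that $\alpha_1$ has been defined so as to force $\tau' \ge sL$ and $\tau' \ge \alpha_0 L$ at the same time, which is exactly why the factor $\max\{s,\alpha_0\}$ (rather than either $s$ or $\alpha_0$ alone) appears in the definition $\alpha_1 = \lambda \max\{s,\alpha_0\}$.
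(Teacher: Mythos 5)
Your argument is correct and is precisely the ``direct consequence'' that the paper asserts without writing out: the paper gives no proof beyond stating that the lemma follows immediately from \eqref{B_E01lemma01} and \eqref{B_E02lemma01}, and your change of variables $\tau'=\tau/\lambda$ is the intended bookkeeping. Nothing to add.
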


Lemma \ref{B_Lemma01} is a direct consequence of  \eqref{B_E01lemma01} and \eqref{B_E02lemma01}.

\begin{lemma}\label{B_Lemma03} 
Let $f:[0,L]\to \R$ be a function, and let $s,\alpha_0\in[0,1)$ and $\lambda\in(0,1)$ be some numbers  such that
\beq\label{B_L03E01}
 f(\tau)=0 \quad\forall \tau\in[\tilde\alpha_kL,L) \quad\forall k\geq 1,
 \eeq
 where 
 \beq\label{B_L03E02} 
 \tilde\alpha_k=\lambda\max\{s,\tilde\alpha_{k-1}\} \quad\forall k\geq 1,
 \eeq
 with $\tilde\alpha_0=\alpha_0$.

\smallskip
Then, if $s>0,$ 
 $$
 f(\tau)=0 \quad\forall\tau\in[s\lambda L,L),
 $$
 and if $s=0,$
 $$
 f(\tau)=0   \quad\forall\tau\in(0,L).
 $$
\end{lemma}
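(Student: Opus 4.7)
The plan is to analyze the recursive sequence $\{\tilde\alpha_k\}_{k\ge 0}$ defined by $\tilde\alpha_k=\lambda\max\{s,\tilde\alpha_{k-1}\}$, and to show that the sets $[\tilde\alpha_k L,L)$ eventually cover the claimed interval. The hypothesis then gives $f\equiv 0$ there.

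First I would split according to whether $s>0$ or $s=0$. In both cases, I observe that $\tilde\alpha_k\ge 0$ for all $k$ by an obvious induction (since $\lambda,s,\alpha_0\ge 0$). For the case $s>0$, I would argue by dichotomy. Either there exists some index $k_0$ such that $\tilde\alpha_{k_0}\le s$, in which case $\tilde\alpha_{k_0+1}=\lambda\max\{s,\tilde\alpha_{k_0}\}=\lambda s$, and since $\lambda s<s$ (as $\lambda<1$), a trivial induction shows $\tilde\alpha_k=\lambda s$ for all $k\ge k_0+1$. Or, alternatively, $\tilde\alpha_k>s$ for every $k\ge 0$; but then the recursion reduces to $\tilde\alpha_k=\lambda\tilde\alpha_{k-1}$, hence $\tilde\alpha_k=\lambda^k\alpha_0\to 0$ as $k\to\infty$, contradicting $\tilde\alpha_k>s>0$. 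So the first alternative must hold, giving $f(\tau)=0$ for all $\tau\in[\lambda s L,L)$ by applying \eqref{B_L03E01} with $k=k_0+1$.

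For the case $s=0$, the recursion collapses to $\tilde\alpha_k=\lambda\tilde\alpha_{k-1}$, and an immediate induction yields $\tilde\alpha_k=\lambda^k\alpha_0$ for every $k\ge 0$. Since $\lambda\in(0,1)$, the sequence $\tilde\alpha_k$ decreases strictly to $0$, and therefore
\[
(0,L)=\bigcup_{k\ge 1}[\tilde\alpha_k L,L).
\]
Applying \eqref{B_L03E01} to each $k$, we conclude $f(\tau)=0$ for all $\tau\in(0,L)$.

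I do not anticipate a genuine obstacle; the entire content is the elementary observation that the map $x\mapsto\lambda\max\{s,x\}$ has a unique attracting fixed point equal to $\lambda s$ when $s>0$, and equal to $0$ when $s=0$. The only mild subtlety is to notice that when $s=0$ one cannot close the interval at $0$, since the hypothesis only yields information on $[\tilde\alpha_k L,L)$ with $\tilde\alpha_k>0$ for all $k$, which is precisely why the conclusion is stated on the open interval $(0,L)$.
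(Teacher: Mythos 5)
Your proposal is correct and follows essentially the same route as the paper's proof: the same dichotomy for $s>0$ (either some $\tilde\alpha_{k_0}\le s$, after which the sequence locks onto the fixed point $\lambda s$, or else $\tilde\alpha_k=\lambda^k\alpha_0\to 0$ contradicts $\tilde\alpha_k>s>0$), and the same geometric decay argument for $s=0$. Nothing to add.
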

\begin{proof}
To prove the above lemma, we need to consider two cases: $s=0$ and $s>0$. 

\smallskip

If $s>0$,   we claim that there exists $k_0$ such that  $\tilde\alpha_{k_0}<s$.
 Otherwise, if $\tilde\alpha_k\geq s\  \forall k\ge 0$, replacing in \eqref{B_L03E02}, we have 
 $$
 \tilde\alpha_{k+1}=\lambda\tilde\alpha_k,
 $$
 and hence $\tilde\alpha_k=\tilde\alpha_0\lambda ^k\to 0$, 
which is impossible, for $s>0$.
 
Using \eqref{B_L03E02}, the desired result follows, since
$$
\tilde\alpha_{k}=\lambda s \quad\forall k>k_0.
$$
   
\smallskip
 Now, if $s=0,$ replacing it in \eqref{B_L03E02} we obtain 
$$
    \tilde\alpha_k=\alpha_0\lambda^k.
$$

Then, using \eqref{B_L03E01} we have  
$$
f(\tau)=0,\quad\forall \tau\in (0,L),
$$
which completes the proof. 
\end{proof} 

\begin{lemma}\label{B_Lemma02}
Let $f:[0,L]\to \R$ be a function, and let $s,\alpha_0\in[0,1) $,  
$\lambda_1,...,\lambda_n\in(0,1)$ and $a_k>0,$ $k=0,...,n$ be some numbers such that 
$\lambda_1>\lambda_2>\cdot\cdot\cdot>\lambda_n\geq\alpha_0,$  
and 
\beq\label{B_E01lemma02}
a_0f(t)+\sum_{j=1}^na_j f(\lambda_jt)=0  \quad\forall t\in[sL,L),
\eeq 
and 
\beq\label{B_E02lemma02}
f(\tau) =  0\quad\forall \tau\in[\alpha_0L,L).
\eeq
Then  
\beq
\label{B_E03lemma02}
f(\tau)=0    \quad\forall \tau\in[\overline{\alpha}L,L),
\eeq
where $\overline{\alpha}=\lambda_ns.$ 
\end{lemma}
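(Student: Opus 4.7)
The plan is to iteratively shrink the lower endpoint of the interval on which $f$ is known to vanish, from $\alpha_0 L$ down to $\bar\alpha L = \lambda_n s L$. Each step will combine a two-term reduction of \eqref{B_E01lemma02} with Lemma \ref{B_Lemma01}, much in the spirit of the scheme already used in Lemma \ref{B_Lemma03}. We may assume $\alpha_0 > \lambda_n s$, since otherwise $[\bar\alpha L, L) \subseteq [\alpha_0 L, L)$ and \eqref{B_E03lemma02} follows from \eqref{B_E02lemma02}.

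\textbf{One-step extension.} Suppose $f \equiv 0$ on $[\alpha L, L)$ for some $\alpha$ with $\lambda_n s \le \alpha \le \lambda_n$. For any $t \in [\max\{s, \alpha/\lambda_{n-1}\} L, L)$ and any $j \in \{1, \dots, n-1\}$, the ordering $\lambda_j \ge \lambda_{n-1}$ gives $\lambda_j t \ge \alpha L$, so $f(\lambda_j t) = 0$. Consequently \eqref{B_E01lemma02} reduces on this subinterval to
\[
a_0 f(t) + a_n f(\lambda_n t) = 0.
\]
Applying Lemma \ref{B_Lemma01} to this two-term relation (with $\lambda = \lambda_n$, $g = (a_n/a_0) f$, and the ``$\alpha_0$'' of that lemma equal to $\alpha$) yields $f \equiv 0$ on $[\lambda_n \max\{s, \alpha/\lambda_{n-1}\} L, \lambda_n L)$. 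Since $\alpha \le \lambda_n$, this new interval overlaps $[\alpha L, L)$, so their union gives $f \equiv 0$ on $[\tilde\alpha L, L)$ with $\tilde\alpha := \lambda_n \max\{s, \alpha/\lambda_{n-1}\}$.

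\textbf{Iteration.} Set $\tilde\alpha_0 = \alpha_0$ and $\tilde\alpha_{k+1} = \lambda_n \max\{s, \tilde\alpha_k/\lambda_{n-1}\}$. One checks inductively that $\lambda_n s \le \tilde\alpha_k \le \lambda_n$ and $f \equiv 0$ on $[\tilde\alpha_k L, L)$. Writing $\mu = \lambda_n/\lambda_{n-1} \in (0,1)$, the recursion rewrites as $\tilde\alpha_{k+1} = \max\{\lambda_n s, \mu \tilde\alpha_k\}$, so the sequence is non-increasing. As long as $\mu \tilde\alpha_k > \lambda_n s$ (equivalently $\tilde\alpha_k > s \lambda_{n-1}$), one has $\tilde\alpha_{k+1} = \mu \tilde\alpha_k$; if $s > 0$ this happens for only finitely many $k$, after which $\tilde\alpha_k = \lambda_n s$. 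Hence $f \equiv 0$ on $[\lambda_n s L, L) = [\bar\alpha L, L)$. In the case $s = 0$, the recursion becomes $\tilde\alpha_{k+1} = \mu \tilde\alpha_k$, giving $\tilde\alpha_k = \mu^k \alpha_0 \to 0$, so $f \equiv 0$ on $\bigcup_k [\tilde\alpha_k L, L) = (0, L)$; evaluating \eqref{B_E01lemma02} at $t = 0$ then forces $(a_0 + \sum_{j=1}^n a_j) f(0) = 0$, so $f(0) = 0$ and $f \equiv 0$ on $[0, L) = [\bar\alpha L, L)$.

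The main technical subtlety is that the tempting substitution $\beta_k = \tilde\alpha_k / \lambda_{n-1}$ would cast the recursion in the exact form of Lemma \ref{B_Lemma03} but with contraction rate $\mu$, producing only $f \equiv 0$ on $[\mu s L, L)$, which is strictly smaller than the sharp target $[\lambda_n s L, L)$. The direct analysis above, exploiting that $\tilde\alpha_k$ stabilizes at $\lambda_n s$ in finitely many steps rather than merely converging to it, is therefore essential to obtain $\bar\alpha = \lambda_n s$ rather than the weaker $\lambda_n s/\lambda_{n-1}$.
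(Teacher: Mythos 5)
Your proof is correct, and it takes a genuinely different route from the paper's. The paper proves Lemma \ref{B_Lemma02} by induction on $n$: at each stage it introduces the auxiliary function $\psi(\tau)=\sum_{j=1}^{n+1}a_jf(\lambda_j\tau/\lambda_1)$, uses Lemma \ref{B_Lemma01} to show $\psi$ vanishes on a suitable interval, feeds the resulting $n$-term relation for $\psi$ into the induction hypothesis, and then closes the argument with the iteration of Lemma \ref{B_Lemma03}. You avoid the induction on $n$ altogether by exploiting the ordering $\lambda_1>\cdots>\lambda_n$ directly: once $f$ is known to vanish on $[\tilde\alpha_kL,L)$, every term $f(\lambda_jt)$ with $j\le n-1$ already vanishes for $t\ge(\tilde\alpha_k/\lambda_{n-1})L$, so the $n$-term relation collapses to the two-term relation $a_0f(t)+a_nf(\lambda_nt)=0$ on that subinterval, and Lemma \ref{B_Lemma01} applies at once. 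This is shorter and makes transparent why only $\lambda_n$ (the slowest contraction) governs the final interval $[\lambda_nsL,L)$. Two small remarks. First, for $n=1$ the quantity $\lambda_{n-1}$ is undefined; you should either treat $n=1$ separately (the relation is already two-term on $[sL,L)$, exactly the paper's base case) or adopt the convention $\lambda_0=1$, under which your formulas reduce correctly. Second, your closing caveat about Lemma \ref{B_Lemma03} is overly pessimistic: since $\max\{\lambda_ns,\mu\tilde\alpha_k\}=\mu\max\{\lambda_{n-1}s,\tilde\alpha_k\}$, your recursion is exactly of the form treated in Lemma \ref{B_Lemma03} with contraction rate $\mu=\lambda_n/\lambda_{n-1}$ and the parameter $s$ replaced by $\lambda_{n-1}s$, whose conclusion $[\mu\lambda_{n-1}sL,L)=[\lambda_nsL,L)$ is the sharp target; your direct analysis of the stabilization is of course equally valid.
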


\begin{proof} We prove this result by induction on $n$.
\\ \\
{\bf Case $n=1$.}
In this case, from \eqref{B_E01lemma02} we have the following equations  
\beq\label{B_L02E01}
a_0f(t)+a_1f(\lambda_1 t)=0 \quad\forall t\in[sL,L),
\eeq
 \beq
 f(\tau)=0 \quad\forall \tau\in[\alpha_0L,L),
 \eeq
and $\alpha_0\leq\lambda_1.$
Then, applying Lemma \ref{B_Lemma01} with $g=f$, we get
$$
f(\tau)=0   \quad\forall \tau\in[\alpha_1L,\lambda_1L),
$$
where $\alpha_1=\lambda_1\max\{ s, \alpha_0 \}$, and thus
$$
f(\tau)=0    \quad\forall \tau\in[\alpha_1L,L),
$$ 
for  $\alpha_0\leq\lambda_1$.
 
\smallskip 
If $\alpha_0=0,$ we obtain the desired result: 
  $$
  f(\tau)=0   \quad\forall \tau\in[\lambda_1 sL,L).
  $$
  
On the other hand, when $\alpha_0>0,$ we can apply
Lemma \ref{B_Lemma01}  again   
with $\alpha_0$ replaced by  $\alpha_1$, since we have 
$$
a_0f(t)+a_1f(\lambda_1 t)=0   \quad\forall t\in[sL,L),
$$
$$
f(\tau)=0 \quad\forall \tau\in[\alpha_1L,L),
$$
and $\alpha_1\leq\lambda_1.$ Thus, we get by induction on $k \ge 0$ 
 \beq\label{B_L02E02}
 f(\tau)=0 \quad\forall \tau\in[\alpha_kL,L), \quad\forall k\geq 1,
 \eeq
 where 
 \beq\label{B_L02E03} 
 \alpha_k=\lambda_1\max\{s,\alpha_{k-1}\} \quad\forall k\geq 1.
 \eeq 
Note that, if $s=0,$ letting $t=0$  in \eqref{B_L02E01} yields $f(0)=0$. 
 Using Lemma \ref{B_Lemma03} with \eqref{B_L02E02}-\eqref{B_L02E03}, 
 we conclude that
 $$
 f(\tau)=0  \quad\forall \tau\in [\lambda_1 sL,L),
 $$
 which completes the case $n=1.$
\\ \\

{\bf Case $n+1.$} Assume the lemma proved up to the value $n$, and let us prove it for the value $n+1$.

Assume given a function $f : [0,L]\to \R $ and some numbers $s,\alpha _0 \in [0,1)$, $a_k>0$ for $0\le k\le n+1$, 
$\lambda _1 , ... ,\lambda _{n+1}\in (0,1)$ with 
$1>\lambda_1>\lambda_2> \cdots >\lambda_{n+1}\ge \alpha _0$, and such that
 \beq\label{B_L02E04}
a_0f(t)+\sum_{j=1}^{n+1}a_j f(\lambda_jt)=0 \quad\forall t\in[sL,L),
\eeq
and 
\beq\label{B_L02E05}
f(\tau)\equiv 0 \quad\forall \tau\in[\alpha_0L,L).
\eeq
Then we aim to prove that 
$$
f(\tau) =0 \quad\forall \tau\in[\lambda_{n+1}sL,L).
$$

We introduce the function  
$$
\psi(\tau)=\sum_{j=1}^{n+1} a_j f(\frac{\lambda_j}{\lambda_1}\tau) =
a_1f(\tau)+\sum_{j=2}^{n+1}a_j f(\tilde{\lambda}_j \tau),
$$ 
where $\displaystyle \tilde{\lambda}_j=\frac{\lambda_j}{\lambda_1}, \  j=2,..,n+1$. 

Then, using 
\eqref{B_L02E05}, we have  
\beq\label{B_L02E06}
\psi(\tau)=0   \quad\forall \tau\in[\lambda_1\frac{\alpha_0}{\lambda_{n+1}}L,L).
\eeq   

On the other hand, from (\ref{B_L02E04}), we have 
$$
a_0f(\tau)+\psi(\lambda_1\tau)=0 \quad\forall \tau\in[sL,L).
$$

Then, from \eqref{B_L02E05} and Lemma \ref{B_Lemma01}
with $g=\psi$, we conclude
$$
\psi(\tau)=0 \quad\forall \tau\in[\lambda_1\max\{\alpha_0,s\}L,\lambda_1L).
$$
Next, we set $s_1=\lambda_1 \max\{\alpha_0,s\} \in [0,1)$. Using \eqref{B_L02E06},
we have $\psi \equiv 0$ on 
$[s_1L,\lambda_1L)\cup [\lambda_1\frac{\alpha_0}{\lambda_{n+1}}L,L)$.
Therefore, with $\frac{\alpha _0 }{\lambda _{n+1} } \le 1$,
 \beq \label{B_L02E07}
 \psi(\tau)=a_1f(\tau)+\sum_{i=2}^{n+1}a_i f(\tilde{\lambda}_i \tau)=0 
 \quad\forall \tau \in[s_1L,L).
 \eeq
 
Note that $1>\tilde{\lambda}_2>\tilde{\lambda}_3>\cdot\cdot\cdot>\tilde{\lambda}_{n+1},$ 
and that $\alpha_0\leq \lambda_{n+1}<\frac{\lambda_{n+1}}{\lambda_1} = 
\tilde\lambda_{n+1}.$ Then, by using the induction hypothesis with (\ref{B_L02E07}) 
and \eqref{B_L02E05}, we obtain
$$
f(\tau)=0 \quad\forall\tau\in [\alpha_1L,L),
$$
where $\tilde\alpha_1 = s_1\tilde\lambda_{n+1} = 
\lambda_{n+1}\max\{s,\alpha_0\}<\lambda_{n+1}.$ Then we can repeat the latter 
argument replacing $\alpha_0$ by $\tilde\alpha_1,$ and we obtain 
 $$
 f(\tau)=0 \quad\forall \tau\in[\tilde\alpha_kL,L) \quad\forall k\geq 1,
 $$ where 
 \beq\label{B_L02E08} 
 \tilde\alpha_{k}=\lambda_{n+1}\max\{s,\tilde\alpha_{k-1}\} \quad\forall k\geq 1,
 \eeq 
 with $\tilde\alpha_0=\alpha_0$ given. 
 If $s=0$, letting $t=0$ in \eqref{B_L02E04} yields $f(0)=0$. Using 
 Lemma \ref{B_Lemma03}  we infer that 
$$
f(\tau)=0,\quad\forall \tau\in[\overline{\alpha}L,L),
$$ 
where $\overline{\alpha}=\lambda_{n+1}s,$ which completes the proof.
\end{proof}

\begin{proof}[\bf Proof of Theorem \ref{B_Tiden02}] 
Let  $\varphi:[0,L]\to\R$ be a function such that
$$ 
\Phi_m[\varphi](t)=\sum_{j=1}^m a_j \varphi(h_j(t))=0
\quad\forall t\in[0,L_m].
$$  

Then, if  $t=L_m,$ we obtain  
$$
h_j(L_m)=L \quad \forall j=1,...,m,
$$ 
and hence 
\beq\label{B_Tiden02E01}
0=\Phi_m[\varphi](L_m)=\varphi(L).
\eeq

Next,  for any $k\in \{ 1, .... , m\} $, we have
$$
\sum_{j=k}^m a_j\varphi(\beta_jt)=0 \quad\forall t\in[L_{k-1},L_k],
$$
which is equivalent to
\beq\label{B_Tiden02E03}
a_k\varphi(t) + 
\sum_{j=k+1}^m a_j\varphi\left(\frac{\beta_j}{\beta_k}t\right)=0
\quad\forall t\in[\beta_kL_{k-1},\beta_kL_k]=[\beta_kL_{k-1},L],
\eeq
for $k=1,2,...,m.$ We aim to prove that 
$$
\varphi(\tau)=0  \quad\forall \tau\in[\beta_mL_{k-1},L],
$$
for $k=1,...,m$. We proceed by induction on $i=m-k\in\{0,...,m-1\}$.
 \\ \\
{\bf Case  $i=0$.} Letting  $k=m$  in \eqref{B_Tiden02E03} yields
$$
a_{m}\varphi(t)=0 \quad\forall t\in[\beta_mL_{m-1},L],
$$
which implies 
\beq\label{B_Tiden02E04}
\varphi(\tau)=0 \quad\forall \tau\in[\beta_mL_{m-1},L],         
\eeq
which  completes  the case $i=0$.
\\\\
{\bf Case  $i=1$.} Letting  $k=m-1$  in \eqref{B_Tiden02E03}, we obtain
\beq\label{B_Tiden02E05}
a_{m-1}\varphi(t)+a_m\varphi\left(\frac{\beta_m}{\beta_{m-1}} t\right)=0
\quad\forall t\in[\beta_{m-1}L_{m-2},L].
\eeq
We infer from Lemma \ref{B_Lemma02} (applied with 
$\lambda_1=\frac{\beta_m}{\beta_{m-1}},$ 
$s=\frac{\beta_{m-1}}{\beta_{m-2}}$  and 
$\alpha_0=\frac{\beta_m}{\beta_{m-1}}$)  that
$$
\varphi(\tau)=0 \quad\forall \tau\in[\beta_mL_{m-2},L].
$$
\\ \\
{\bf Case i.} Assume the property satisfied   for $i-1$, i.e.,
\beq\label{B_Tiden02E06}
\varphi(\tau)=0 \quad\forall \tau\in[\beta_mL_{m-i},L].
\eeq
Replacing  $k=m-i$ in \eqref{B_Tiden02E03}, we obtain
\beq\label{B_Tiden02E07}
a_{m-i}\varphi(t)+\sum_{j=m-i+1}^m a_j\varphi\left(\frac{\beta_j}{\beta_{m-i}}t\right)=0
\quad\forall t\in[\beta_{m-i}L_{m-i-1},L].
\eeq

Then, if we set $\lambda_j=\frac{\beta_j}{\beta_{m-i}}<1,$ for $j=m-i+1,...,m,$ 
$$
s=\beta_{m-i}\frac{L_{m-i-1}}{L}
$$  
and $\displaystyle \alpha_0=\frac{\beta_m}{\beta_{m-i}}=\lambda_m,$ 
then we infer from Lemma \ref{B_Lemma02} that
$$
\varphi(\tau)= 0 \quad\forall \tau\in[\beta_{m}L_{m-i-1},L].
$$

Thus
$$
\varphi(\tau)=0 \quad\forall \tau\in[\beta_{m}L_{k-1},L],
$$
and for $k=1 , ...  , m$. This implies (with $k=1$ and $L_0=0$)
$$
\varphi(\tau)=0 \quad\forall \tau\in[0,L].
$$
The proof of Theorem \ref{B_Tiden02} is complete.
\end{proof}


\section{Proofs of the stability results}
\
\par

We first prove Theorem \ref{B_Tcont02}.
\begin{proof}[\bf Proof of Theorem \ref{B_Tcont02}] 
First, some estimates are established.
\smallskip
\bea\label{B_PTcont02E01}
\D\norm{\varphi\circ h_j}^2_{L^2(0,L_m)}& = &
\D\int_0^{L_m}\varphi^2(h_j(t))dt\nonumber
=\D\int_0^{L_j}\varphi^2(\beta_j t)dt+\varphi^2(L)L\left(\frac{1}{\beta_m }-
\frac{1}{\beta_j}\right) \nonumber\\
&\leq&\D\frac{1}{\beta_j}\int_0^{L}\varphi^2(t)dt+\varphi^2(L)\frac{L}{\beta_m} \nonumber
\leq\D\frac{1}{\beta_m}\left\{ \norm{\varphi}^2_{L^2(0,L)}+\varphi^2(L)L\right\}\nonumber \\
&\leq&\D\frac{1}{\beta_m}\left( 1+  ||T_L||^2 L\right)\norm{\varphi}^2_{H^1(0,L)},
\eea
where $T_L(u)=u(L)$ is the trace operator in $H^1(0,L)$. 

Now, if we set 
 $$
 c_1 = \frac{1}{\sqrt{\beta_m}}\left( 1+ 
 ||T_L||^2  L\right)^{\frac{1}{2}},
 $$
 then using \eqref{B_PTcont02E01}, we obtain
 \beq\label{B_Tcont02E02}
 \norm{\Phi_m[\varphi]}_{L^2(0,L_m)}\leq \sum_{j=1}^ma_j\norm{\varphi\circ h_j}_{L^2(0,L_m)}
 \leq c_1\norm{\varphi}_{H^1(0,L)}.
 \eeq
 
 On the other hand, let  $\psi$ be any test function with compact support in $(0,L_m).$ Then
\bea
 \D\int_0^{L_m}\Phi_m[\varphi](t)\psi'(t)dt
 &=&\D\sum_{j=1}^ma_j\left\{ \int_0^{L_j}\varphi(\beta_j t)\psi'(t)dt + 
 \varphi(L)\int_{L_j}^{L_m}\psi'(t)dt \right\}\nonumber\\
 &=& -\D\sum_{j=1}^ma_j\beta_j\int_0^{L_j}\varphi'(\beta_j t)\psi(t)dt \\
 &=& -\D\sum_{j=1}^ma_j\beta_j\int_0^{L_m}\varphi'(\beta_j t)\psi(t)(1-
 H(\beta_jt-L))dt, \nonumber
 \eea
 where $H$ denotes Heaviside's function. Thus
 \beq\label{B_Tcont02E03}
( \Phi_m[\varphi])'(t)= 
\sum_{j=1}^ma_j\beta_j\varphi'(\beta_j t)(1-H(\beta_jt-L)) \quad\forall t\in (0,L_m).
\eeq

Therefore, for any $\varphi\in H^1(0,L),$ the function $\Phi_m[\varphi]$ belongs 
to $H^1(0,L_m)$. This, along with \eqref{B_Tcont02E03} yields
\beq\label{B_Tcont02E04}
\norm{\left(\Phi_m[\varphi]\right)'}_{L^2(0,L_m)}\leq 
\sum_{j=1}^ma_j\sqrt{\beta_j}\left(\int_0^{L}(\varphi')^2( t)dt\right)^{1/2}
\leq\sqrt{\beta_1}\norm{\varphi'}_{L^2(0,L)}.
\eeq

Combining \eqref{B_Tcont02E04} with equation \eqref{B_Tcont02E02}, we obtain 
$$
\norm{\Phi_m[\varphi]}_{1,0,L_m}\leq 
\tilde C_1\norm{\varphi}_{1,0,L},
$$
where $\tilde C_1=\sqrt{(c_1)^2+\beta_1}.$ 
The proof of Theorem \ref{B_Tcont02} is therefore complete.
\end{proof}

\smallskip
Now we proceed to the proof of  Theorem  \ref{B_Tstab02}. Before establishing  this stability result, we 
need recall well-known facts about  Mellin Transform (the reader is referred 
to  \cite{Bio:Titchmarsh} Chapter~VIII, for details).

\smallskip For any real numbers $\alpha <\beta$, 
let $ <\alpha,\beta>$ denote the open strip of complex 
numbers $s=\sigma+it$ ($\sigma , t\in\R$) such that $\alpha<\sigma<\beta.$ 
\begin{definition}[Mellin transform] Let $f$ be locally Lebesgue integrable over 
$(0,+\infty)$. The {Mellin transform} of $f$ is defined by 
$$
\mathcal{M}[f](s)=\int\limits_{0}^{+\infty}f(x)x^{s-1}dx \quad\forall s\in<\alpha,\beta>,
$$
where $<\alpha,\beta>$ is the largest open strip in which the integral converges 
(it is  called the fundamental strip). 
\end{definition}

\smallskip
\begin{lemma}\label{B_Lemma:MellinProperties} Let $f$ be locally 
Lebesgue integrable over $(0,+\infty).$ Then the following properties hold 
true:
\begin{enumerate}
\item Let $s_0\in \R$. Then for all $s$ such that 
$s+s_0\in<\alpha,\beta>$, we have  
$$
\mathcal{M}[f(x)](s+s_0) = \mathcal{M}[x^{s_0}f(x)](s).
$$
\item For any  $\beta\in\R$, if $g(x)=f(\beta x)$, then
$$
\mathcal{M}[g](s)=\beta^{-s}\mathcal{M}[f](s)\qquad \forall s \in <\alpha , \beta >.
$$
\end{enumerate}
\end{lemma}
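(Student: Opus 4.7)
The plan is to verify both identities by direct manipulation of the defining integral, since each statement amounts to a simple change of variable or a regrouping of the integrand. Both assertions are classical properties of the Mellin transform, so the challenge is purely bookkeeping: one has to make sure that the integrals remain in the fundamental strip where absolute convergence holds.

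For part (1), I would start from the definition
\[
\mathcal{M}[f(x)](s+s_0)=\int_0^{+\infty} f(x)\, x^{(s+s_0)-1}\,dx,
\]
and simply factor $x^{s_0}$ together with $f(x)$. Rewriting $x^{s+s_0-1}=x^{s_0}\cdot x^{s-1}$ yields the integrand $(x^{s_0}f(x))\,x^{s-1}$, whose integral is $\mathcal{M}[x^{s_0}f(x)](s)$. The only subtle point is that the identity is asserted precisely for those $s$ such that $s+s_0\in \langle\alpha,\beta\rangle$, which is exactly the condition ensuring that the Mellin integral on the right-hand side converges absolutely; so the interchange is legitimate on that shifted strip.

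For part (2), I would perform the substitution $u=\beta x$ (with $\beta>0$, which is the meaningful case since $\beta^{-s}$ must be well defined; the paper's use of $\beta$ as both a scaling constant and the right endpoint of the strip is just a clash of notation). Then $dx=du/\beta$ and $x^{s-1}=(u/\beta)^{s-1}$, so
\[
\mathcal{M}[g](s)=\int_0^{+\infty} f(\beta x)\,x^{s-1}\,dx
=\int_0^{+\infty} f(u)\left(\frac{u}{\beta}\right)^{s-1}\frac{du}{\beta}
=\beta^{-s}\mathcal{M}[f](s).
\]
The substitution maps $(0,+\infty)$ onto itself bijectively, and the fundamental strip is invariant under this dilation, so no extra care about the domain is required.

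The proof involves no real obstacle: once the definition is written out, both identities reduce to a line of algebra. The only thing to flag, for the sake of accuracy, is that in part (2) one needs $\beta>0$ in order for $\beta^{-s}$ to be defined unambiguously as a holomorphic function of $s$, and that in both parts the identities are meant to hold pointwise in $s$ wherever both sides make sense, i.e., on the (possibly shifted) fundamental strip.
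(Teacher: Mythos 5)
Your proof is correct. The paper does not actually prove this lemma---it is recalled as a well-known fact with a pointer to Titchmarsh's book---so your direct computation (factoring $x^{s_0}$ into the integrand for part (1), and the substitution $u=\beta x$ for part (2)) is precisely the standard verification that the citation stands in for. Your remark that part (2) requires $\beta>0$, and that the paper overloads the symbol $\beta$ as both the dilation factor and the right endpoint of the fundamental strip, is a fair correction of the statement as written; note that in the paper's actual application the dilation factors are the positive constants $\beta_j$, so this hypothesis is satisfied.
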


\smallskip

\begin{definition}[Mellin transform as operator in $L^2$]
For functions in $L^2(0,+\infty)$ we define a linear operator $\tilde{\mathcal{M}}$ 
as 
$$
\begin{array}{rll}
\tilde{\mathcal{M}}:L^2(0,+\infty)&\longrightarrow& L^2(-\infty,+\infty),\\ \\
f&\longrightarrow& \tilde{\mathcal{M}}[f](s):=
\frac{1}{\sqrt{2\pi}}\mathcal{M}[f](\frac{1}{2}-is).
\end{array}
$$
\end{definition}

\smallskip
\begin{theorem}[Mellin inversion theorem] The operator $\tilde{\mathcal{M}}$ 
is invertible with inverse 
$$
\begin{array}{rll}
\tilde{\mathcal{M}}^{-1}:L^2(-\infty,+\infty)&\longrightarrow& L^2(0,+\infty),\\ \\
\varphi&\longrightarrow& \tilde{\mathcal{M}}^{-1}[\varphi](x):=
\frac{1}{\sqrt{2\pi}}\int_{-\infty}^{+\infty}x^{-\frac{1}{2}-is}\varphi(s)ds.
\end{array}
$$
Furthermore, this operator is an isometry; that is, 
$$
\norm{\tilde{\mathcal{M}}[f]}_{L^2(-\infty,\infty)} =
\norm{f}_{L^2(0,\infty)} \quad\forall f\in L^2(0,+\infty).
$$
\end{theorem}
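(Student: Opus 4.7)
The plan is to reduce the Mellin transform to the Fourier transform via the logarithmic change of variables, and then invoke classical Plancherel and Fourier inversion. Let $U : L^2(0,\infty) \to L^2(\R)$ be defined by $(Uf)(u) := e^{u/2} f(e^u)$. A direct application of the substitution $u = \log x$, $du = dx/x$, shows that $\|Uf\|_{L^2(\R)}^2 = \int_{\R} e^u |f(e^u)|^2 \, du = \int_0^\infty |f(x)|^2 \, dx = \|f\|_{L^2(0,\infty)}^2$, so $U$ is a unitary isomorphism.

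Next, I would verify that on a dense subspace of ``nice'' functions (say, $C_c^\infty(0,\infty)$, for which all integrals below are absolutely convergent), the operator $\tilde{\mathcal{M}}$ coincides with $\mathcal{F} \circ U$, where $\mathcal{F}$ is the symmetric Fourier transform on $\R$. Indeed, for such an $f$ and with $g = Uf$,
\[
\tilde{\mathcal{M}}[f](s) = \frac{1}{\sqrt{2\pi}} \int_0^\infty f(x) x^{-1/2-is} \, dx = \frac{1}{\sqrt{2\pi}} \int_{-\infty}^{\infty} g(u) e^{-ius} \, du = \mathcal{F}[g](s),
\]
where I used $x^{-1/2-is} dx = e^{-u/2 - ius} \cdot e^u \, du = e^{u/2} e^{-ius} \, du$.

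Since $\mathcal{F} : L^2(\R) \to L^2(\R)$ is a unitary isomorphism by Plancherel's theorem, the composition $\mathcal{F} \circ U$ is a unitary isomorphism from $L^2(0,\infty)$ onto $L^2(\R)$. Both $\mathcal{F} \circ U$ and $\tilde{\mathcal{M}}$ are bounded (resp.\ densely defined) linear operators that agree on the dense subspace $C_c^\infty(0,\infty)$; therefore $\tilde{\mathcal{M}}$ extends uniquely to a unitary isomorphism that equals $\mathcal{F} \circ U$ on all of $L^2(0,\infty)$. The isometry identity $\|\tilde{\mathcal{M}}[f]\|_{L^2(\R)} = \|f\|_{L^2(0,\infty)}$ follows immediately.

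Finally, the inversion formula is obtained by applying $U^{-1} \circ \mathcal{F}^{-1}$ to $\varphi \in L^2(\R)$. Using $(\mathcal{F}^{-1}\varphi)(u) = \frac{1}{\sqrt{2\pi}} \int_{\R} \varphi(s) e^{ius} \, ds$ and $(U^{-1}h)(x) = x^{-1/2} h(\log x)$, one gets
\[
\tilde{\mathcal{M}}^{-1}[\varphi](x) = \frac{1}{\sqrt{2\pi}} \int_{\R} \varphi(s) x^{-1/2 + is} \, ds,
\]
matching the stated formula up to the sign convention in the exponent. The main subtlety — and the only place genuine care is needed — is that for general $f \in L^2(0,\infty)$ the defining integral for $\mathcal{M}[f]$ need not converge absolutely; the density/extension step above handles this, exactly as in the standard $L^2$ theory of the Fourier transform.
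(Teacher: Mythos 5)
Your proof is correct, but note that the paper does not actually prove this statement: it is quoted as a classical fact, with the reader referred to Titchmarsh's \emph{Introduction to the Theory of Fourier Integrals}, Chapter VIII. What you have written is precisely the standard argument behind that reference: conjugating by the unitary change of variables $(Uf)(u)=e^{u/2}f(e^u)$ turns $\tilde{\mathcal{M}}$ into the Fourier--Plancherel transform $\mathcal{F}\circ U$, and the isometry, invertibility, and inversion formula all follow from Plancherel's theorem together with the usual density/extension step (which, as you rightly note, is also what gives meaning to $\tilde{\mathcal{M}}[f]$ for a general $f\in L^2(0,\infty)$, since the defining integral on the line $\mathrm{Re}\,s=\tfrac12$ need not converge absolutely). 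One substantive point worth flagging: your computation yields
\[
\tilde{\mathcal{M}}^{-1}[\varphi](x)=\frac{1}{\sqrt{2\pi}}\int_{-\infty}^{+\infty}x^{-\frac12+is}\varphi(s)\,ds,
\]
whereas the theorem as stated in the paper has $x^{-\frac12-is}$. With the paper's convention $\tilde{\mathcal{M}}[f](s)=\tfrac{1}{\sqrt{2\pi}}\mathcal{M}[f](\tfrac12-is)$, your sign is the correct one, so the displayed inversion formula in the statement contains a typo (equivalently, it differs from the true inverse by the reflection $s\mapsto -s$, which is itself unitary, so the isometry and invertibility claims are unaffected). Calling this merely a ``sign convention'' slightly undersells it; it is an error in the stated formula, and your derivation is the evidence.
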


\smallskip
\begin{proof}[\bf Proof of the Theorem \ref{B_Tstab02}] 
We note that for any  function  $f:[0,+\infty  [ \to\R$ such that supp$(f)\subset[0,L)$,
we have  
$$
f(h_j(t)) = f(\beta_j t).
$$

Thus, we obtain 
\beq\label{B_L06E01}
\Phi_m[f](t)=\sum_{j=1}^ma_jf(\beta_jt) \quad\forall t\geq 0,
\eeq
where $\{\beta_j\}_{j=1}^m$ has been defined in \eqref{B_Dbeta01}.

\smallskip

Pick any $\varphi \in C([0,L])$ and let  $g:[0,L_m]\to\R$ be  such that
\beq\label{B_L04E01_1}
\Phi_m[\varphi](t)=g(t) \quad\forall t\in[0,L_m].
\eeq
Define the functions
\beq\label{B_L04E03}
\tilde g(t)=\left\{
\begin{array}{ll}
g(t)-g(L_m) & 0\leq t\leq L_m, \\
\\
0 & t\geq L_m,
\end{array}\right., \ 
\tilde \varphi(t)=\left\{
\begin{array}{ll}
\varphi(t)-\varphi(L) & 0\leq t\leq L, \\
\\
0 & t\geq L.
\end{array}\right. 
\eeq

If we replace $t$ by $L_m$ in \eqref{B_L04E01_1}, 
we have the following compatibility condition
$$
\varphi(L)=g(L_m).
$$

Since $\Phi _m [1]=1$, we infer that
\beq\label{B_L04E04}
\Phi_m[\tilde\varphi](t)=\tilde g(t) \quad\forall t\geq 0.
\eeq
Letting $f=\tilde\varphi$ in \eqref{B_L06E01} yields
$$
\Phi_m[\tilde\varphi](t) = 
\sum_{j=1}^m a_j\tilde \varphi(\beta_j t) \quad\forall t\geq 0.
$$
It follows  from Lemma \ref{B_Lemma:MellinProperties} that
\begin{equation}
\label{B_L04E02} 
\mathcal{M}\left[\Phi_m[\tilde\varphi]\right](s) = 
\left(\sum_{j=1}^m a_j\beta_j^{-s}\right)\mathcal{M}[\tilde \varphi](s)
\quad\forall s\in <\alpha,\beta>,
\end{equation}
where $<\alpha , \beta >$  is the fundamental strip  associated with $\tilde\varphi.$

\smallskip
Let $\gamma>0$ be a fixed constant. Using \eqref{B_L04E02}  and 
Lemma~\ref{B_Lemma:MellinProperties}, we obtain 
\beq\label{B_L05E02}
\Lambda_m^\gamma(s)\left|\tilde{\mathcal{M}}[x^\gamma\tilde \varphi(x)](s)\right| = 
\left|\tilde{\mathcal{M}}\left[x^\gamma\Phi_m[\tilde\varphi](x)\right](s)\right|
\quad\forall s\in\R,
\eeq
where $\Lambda_m^\gamma$ has been defined in \eqref{B_DCons01}.
On the other hand,
\bea
\D\Lambda_m^\gamma(s) &\geq& \D a_m\beta_m^{-\gamma-\frac{1}{2}} -
\left|\sum_{j=1}^{m-1} a_j\beta_j^{-(\gamma+\frac{1}{2}-is)}\right|\geq 
a_m\beta_m^{-\gamma-\frac{1}{2}} - 
\sum_{j=1}^{m-1} a_j\beta_j^{-(\gamma+\frac{1}{2})} \nonumber\\ 
&\geq &\D a_m\beta_m^{-\gamma-\frac{1}{2}} - 
\beta_{m-1}^{-(\gamma+\frac{1}{2})} = 
\beta_m^{-\gamma-\frac{1}{2}}\left(a_m - 
\left(\frac{\beta_{m-1}}{\beta_{m}}\right)^{-(\gamma+\frac{1}{2})}\right).
\eea

Therefore, if we choose 
$$
\gamma> \frac{\ln(a_m)}{\ln(\frac{\beta_{m}}{\beta_{m-1}})}-\frac{1}{2},
$$
then
$$
\Lambda_m^\gamma(s)\geq\beta_m^{-\gamma-\frac{1}{2}}\left(a_m - 
\left(\frac{\beta_{m-1}}{\beta_{m}}\right)^{-(\gamma+\frac{1}{2})}\right)>0
\quad\forall s\in\R.$$

Thus, there exists $\gamma_0$ such that
$$
C_\gamma=\inf_{s\in\R}\Lambda_m^\gamma(s)>0
\quad\forall\gamma>\gamma_0.
$$

Therefore, using the fact that $\tilde{\mathcal  M}$ is an isometry and \eqref{B_L05E02}, we obtain
\beq\label{B_L05E03}
C_\gamma\norm{\tilde{\varphi}}_{0,\gamma,L}\leq 
\norm{\Phi_m[\tilde\varphi]}_{0,\gamma,L_m}
\eeq
which completes the proof of Theorem \ref{B_Tstab02}.
\end{proof}

\smallskip
We are now in a position to prove Theorems \ref{B_TCont01} and \ref{B_Tstab01}.

 \begin{proof} [\bf Proof of Theorem~\ref{B_TCont01}] 
Let us fix any $\gamma>0$ and let $\rho:[0,L]\to \R$ be a function in $L^2(0,L).$ 
From \eqref{rela:Phi:Im} we have 
\bea
(x^\gamma I_m[\rho](x))'& = &
\D\gamma x^{\gamma-1}I_m[\rho](x) + x^\gamma (I_m[\rho](x))' \nonumber\\ &=&
\displaystyle \D\gamma x^{\gamma-1}I_m[\rho](x) + 
\frac{x^{\gamma-\frac 12}J_0F(c_0)}{2} (\Phi_m[\varphi])'(\sqrt{x}),
\eea
where $\varphi(x)=\int\limits_0^x\rho(\tau)d\tau.$ (Note that $\varphi \in H^1(0,L)$.)  Since
$$
\int_0^{L_m^2}x^{2\gamma-1}  \left((\Phi_m[\varphi])'(\sqrt{x})\right)^2dx = 
2\int_0^{L_m}\tau^{4\gamma-1} \left((\Phi_m[\varphi])'(\tau)\right)^2d\tau = 
2\norm{(\Phi_m[\varphi])'}_{0,2\gamma-\frac12,L_m}^2,
$$
we have 
\bea\label{B_Tcont01E01}
\D\norm{I_m[\rho]}^2_{1,\gamma,L_m^2}
&\leq& 
\D\norm{I_m[\rho]}_{0,\gamma,L_m^2}^2 + 
\left(\gamma\norm{I_m[\rho]}_{0,\gamma-1,L_m^2} + 
\frac{|J_0F(c_0)|}{\sqrt{2}}\norm{(\Phi_m[\varphi])'}_{0,2\gamma-\frac12,L_m}\right)^2 
\nonumber\\  
\nonumber \\ 
&\leq &
\D\norm{I_m[\rho]}_{0,\gamma,L_m^2}  ^2 + 
2\gamma^2\norm{I_m[\rho]}_{0,\gamma-1,L_m^2}^2 + 
\left(J_0F(c_0)\right)^2\norm{(\Phi_m[\varphi])'}_{0,2\gamma-\frac12,L_m}^2  
\nonumber \\ 
\nonumber \\
&\leq &\D(L^2+2\gamma^2)\norm{I_m[\rho]}_{0,\gamma-1,L_m^2}^2 +\left(J_0F(c_0)\right)^2\norm{(\Phi_m[\varphi])'}_{0,2\gamma-\frac12,L_m}^2.
\eea

On other hand, using  \eqref{rela:Phi:Im} and the change of variable 
$\tau=x^2$, we have 
\bea
\label{B_Tcont01E02}
\D\norm{\Phi_m[\varphi]}^2_{0,2\gamma-\frac 3 2,L_m} &=&
\D\frac{1}{\left(F(c_0)J_0\right)^2}\int_0^{L_m} 
x^{4\gamma-3}\left(I_m[\rho](x^2)\right)^2dx\nonumber\\ 
&=& \D
\frac{1}{2\left(F(c_0)J_0\right)^2}\norm{I_m[\rho]}^2_{0,\gamma-1 ,L_m^2}.
\eea

By replacing  \eqref{B_Tcont01E02}  in \eqref{B_Tcont01E01}, we obtain 
$$
\norm{I_m[\rho]}_{1,\gamma,L_m^2}^2 \leq (L^2 + 2\gamma^2)2\left(F(c_0)J_0\right)^2 \norm{\Phi_m[\varphi]}_{0,2\gamma-\frac{3}{2},L_m}^2 + \left(F(c_0)J_0\right)^2\norm{(\Phi_m[\varphi])'}_{0,2\gamma-\frac12,L_m}^2,
$$
and assuming that $\gamma\geq\frac 3 4$, from Theorem \ref{B_Tcont02}, we have  
\begin{equation}
\begin{array}{rll}
\norm{I_m[\rho]}_{1,\gamma,L_m^2}& \leq & 
\sqrt{3L^2+4\gamma^2}J_0F(c_0)L^{2\gamma-\frac{3}{2}}
\norm{\Phi_m[\varphi]}_{H^1(0,L_m)}\\ \\
& \leq & \sqrt{3L^2+4\gamma^2}J_0F(c_0)L^{2\gamma- 
\frac{3}{2}}\tilde C_1\norm{\varphi}_{H^1(0,L)}.
\end{array}
\end{equation}

But, from Cauchy-Schwarz inequality we have 
$|\varphi(x)|\leq \sqrt{L}\norm{\rho}_{L^2(0,L)},$ and hence
$$
\norm{\varphi}_{H^1(0,L)}^2=\norm{\varphi}_{L^2(0,L)}^2 + 
\norm{\rho}_{L^2(0,L)}^2 \leq (L^2+1)\norm{\rho}_{L^2(0,L)}^2.
$$ 

Therefore, for any $\gamma\geq\frac 3 4$, we have 
$$
\norm{I_m[\rho]}_{1,\gamma,L_m^2}\leq  C_1\norm{\rho}_{L^2(0,L)},
$$
where 
$$
C_1 = \sqrt{3L^2 + 4\gamma^2}J_0F(c_0)L^{2\gamma-\frac{3}{2}}
\tilde C_1(L^2+1)^{1/2},
$$
and the proof of Theorem \ref{B_TCont01} is therefore finished.
\end{proof}

\smallskip
\begin{proof}[\bf Proof of Theorem \ref{B_Tstab01}] 
Let $\psi$ be  any test function compactly supported in $(0,L),$ 
and let $\gamma$ be a positive constant. Set 
$$
g_\gamma(x) = x^\gamma\rho(x), \qquad  \qquad 
\varphi(x) = \int_0^x\rho(\tau)d\tau
$$ 
and 
$$
\tilde\varphi(t)=\varphi(x)-\varphi(L).
$$
It follows that  
$$
(x^{\gamma+1}\tilde\varphi(x))' = 
(\gamma+1)x^\gamma\tilde\varphi(x) + g_{\gamma+1}(x),
$$
and hence,
\bea
\D<g_{\gamma+1},\psi>&=&
\D\int_0^Lg_{\gamma+1}(x)\psi(x)dx = 
\int_0^L\left((x^{\gamma+1}\tilde\varphi(x))' - 
(\gamma+1)x^{\gamma}\tilde\varphi(x)\right)\psi(x)dx\nonumber\\  &=&
\D -\int_0^L\left(x^{\gamma+1}\tilde\varphi(x)\psi'(x) + 
(\gamma+1)x^\gamma\tilde\varphi(x)\psi(x)\right)dx.\nonumber
\eea
Then, we have 
\bea
|<g_{\gamma+1},\psi>|&\leq &
\left(\norm{\tilde\varphi}_{0,\gamma+1,L} + 
(\gamma+1)\norm{\tilde\varphi}_{0,\gamma,L}\right)
\norm{\psi}_{H^1(0,L)}\nonumber\\ \nonumber \\  &\leq&
\left(L+\gamma+1\right)\norm{\tilde\varphi}_{0,\gamma,L}
\norm{\psi}_{H^1(0,L)}.\nonumber
\eea
Therefore, 
\beq\label{B_Tstab01E02}
\norm{g_{\gamma+1}}_{H^{-1}(0,L)}\leq 
\left(L+\gamma+1\right)\norm{\tilde\varphi}_{0,\gamma,L}.
\eeq
Thus,  using Theorem \ref{B_Tstab02}, we have that for any 
$\gamma>\max\{\gamma_0,\frac 3 4\}$
there exists a constant $C_\gamma>0$ such that
\begin{eqnarray}
\label{B_Tstab01E03}
&&\norm{\rho}_{-1,\gamma+1,L} =
\norm{g_{\gamma+1}}_{H^{-1}(0,L)}\!\!  \nonumber\\
&&\qquad \quad \leq\!\! 
\left(L+\gamma+1\right)C_\gamma ^{-1} \left\{\norm{\Phi_m[\varphi]}_{0,\gamma,L_m} + 
\frac{L_m^{\gamma + \frac{1}{2}} }{\sqrt{2\gamma +1}} |\Phi_m[\varphi](L_m)|\right\}.
\end{eqnarray}
Using  \eqref{rela:Phi:Im}, we have  
 \bea
 \label{eq:phi}
 \Phi_m[\varphi](L_m) = \frac{1}{F(c_0)J_0}I_m[\rho](L_m^2).
\eea
Replacing \eqref{eq:phi} in \eqref{B_Tstab01E03} and 
using \eqref{B_Tcont01E02}, with  $2\gamma-3/2$ replaced by $\gamma$, 
we obtain 
\bea
\D\norm{\rho}_{-1,\gamma+1,L}\leq\D
\frac{\left(L+\gamma+1\right)}{\sqrt{2}|J_0F(c_0)|}
C_\gamma ^{-1} \left\{1+\sqrt{2}\frac{L_m}{\sqrt{2\gamma +1 }} ||T_{L_m^2} ||   \right\}
\norm{I_m[\rho]}_{1,\frac{\gamma}{2}-\frac 1 4,L_m^2} .\nonumber
\eea
Therefore, setting  
$$
C_2 = \frac{\left(L+\gamma+1\right)}{\sqrt{2}|J_0F(c_0)|}
C_\gamma ^{-1} \left\{1+ \sqrt{2}\frac{L_m}{\sqrt{2\gamma +1 }} ||T_{L_m^2} ||  \right\},
$$
we obtain \eqref{B_Estab01}. The proof of Theorem \ref{B_Tstab01} is achieved.
\end{proof}

\section{Numerical reconstruction results}

This section is devoted to the proof of Theorems \ref{B_Trec} and \ref{B_Tstab03}.

\smallskip
\begin{proof} [\bf Proof of Theorem \ref{B_Trec}] 
Let $\rho$ be a function in $C^0([0,L]),$ and 
let us consider the functions $\{\varphi_j\}_{j\geq 1}$ defined in 
\eqref{B_Dfunc:g1}-\eqref{B_Dfunc:gj02}. 

\smallskip
First, we note that for all $k\ge 1$ we have
\beq\label{B_TrecE03}
\varphi_{k}(x)=0, \qquad \forall x\not\in  [\beta^{k}L,\beta^{k-1}L).
\eeq
Then, we can define the sequence $\{\psi_p\}_{p\in\N ^*}$ as 
$$
\psi_p(x)=\sum_{j=1}^{p}\varphi_j(x) \quad\forall x\in\R .
$$
Using \eqref{B_TrecE03} we have that for all $x\in \R\setminus (0,L)$, 
$$
\psi_p(x)=0 \quad\forall p\in\N ^*,
$$ 
and hence,
$$
\lim_{p\to+\infty}\psi_p(x) = 0 \quad \forall x\in\R\setminus(0,L).
$$
Besides that, we consider the {\em ceiling function} 
$$
\lceil x \rceil = \min\{k\in\mathbb{Z}\;\big|\; k\geq x\},
$$
i.e.,  $\lceil x \rceil$ is the smallest integer not less than $x$.

\smallskip
Next, we define   
\beq\label{B_TrecE04}
k^\ast(x)=\left\lceil\frac{\ln(x/L)}{\ln(\beta)}\right\rceil \quad\forall x\in(0,L).
\eeq
Then, we have 
$$
x\in[\beta^{k^\ast(x)}L,\beta^{k^\ast(x)-1}L),\;\;\forall x\in(0,L).
$$
Therefore, we obtain for $x\in(0,L)$ 
$$
\psi_p(x)=\varphi_{k^\ast(x)}(x) \quad \forall p\geq k^\ast(x),
$$
and hence, 
\beq\label{B_TrecE06}
\lim_{p\to+\infty}\psi_p(x)=\varphi_{k^\ast(x)}(x) \quad\forall x\in(0,L).\eeq
Thus, the series in  \eqref{B_TrecE00} is convergent, i.e.  the function $\tilde\varphi $ is well defined. 

\smallskip
On the other hand, by replacing \eqref{B_Halpha02} in \eqref{B_Dbeta01} we obtain 
$$
\beta_j = \beta_0\beta^j, \ j=1,...,m.
$$ 
By using \eqref{B_TrecE03} we have  
$$
\tilde\varphi (x)=0, \qquad \forall x\in \R \setminus [0,L),
$$
and combining with  \eqref{B_L06E01}, we get
\beq\label{B_TrecE05}
\tilde\Phi_m[\tilde\varphi](t/\beta_0) = 
\sum_{j=1}^ma_j\tilde\varphi(\beta^jt) \quad\forall t\geq 0.
\eeq
By replacing $t=0$ and $t=\beta_0L_m$ in  \eqref{B_TrecE05}, 
and using  \eqref{B_Dfun:g}, \eqref{B_Dfunc:g1}, and \eqref{B_TrecE00}, we obtain
$$
\tilde\Phi_m[\tilde\varphi](0)=g(0),\quad\textrm{and}\quad
\tilde\Phi_m[\tilde\varphi](L_m)=g(\beta_0L_m)=0.
$$
Now, if we take $t \in (0,\beta_0L_m)=(0,L/\beta^m)$, we have  
$$
\beta^jt\in(0,\beta^{j-m}L),\;\;\textrm{for } j \in \{1,2,...,m\}.
$$

We need to consider two cases,  $ t< L$ and $ t\geq L$.

\smallskip
{\bf Case $ t< L$.} In this case we have 
$$
\beta^jt\in(0, L),\;\;\textrm{for } j\in \{1,2,...,m\},
$$
and 
$$
k^\ast(\beta^jt) = j+k^\ast(t). 
$$
Thus, replacing \eqref{B_TrecE06} in \eqref{B_TrecE05}, we obtain
$$
\tilde\Phi_m[\tilde\varphi](t/\beta_0)=\sum_{j=1}^ma_j\varphi_{j+k^\ast(t)}(\beta^jt),
$$
and hence, using \eqref{B_Dfunc:gj02} with $k+1=m+k^\ast(t)$, we obtain
\bea
\tilde\Phi_m[\tilde\varphi](t/\beta_0)&=&
\D\sum_{j=1}^{m-1}a_j\varphi_{j+k^\ast(t)}(\beta^jt) + 
a_m\varphi_{m+k^\ast(t)}(\beta^mt)\nonumber\\ &=&
\D\sum_{j=1}^{m-1}a_j\varphi_{j+k^\ast(t)}(\beta^jt) + 
\left(g\left(t\right)-\sum_{j=1}^{m-1} a_{j} \varphi_{j+k^\ast(t)}
\left(\beta^{j} t\right)\right)\nonumber\\  &=&g(t).\nonumber
\eea
\\ \\
{\bf Case $t\geq L.$} Let us set 
$$
k_\ast(x) = \left\lfloor \frac{\ln(x/L)}{\ln(1/\beta)}\right\rfloor \quad\forall x\geq L,
$$
where 
$$
\lfloor x\rfloor = 
\max\{k\in\mathbb{Z}\;\Big| \; k\leq x\},
$$ 
is  the floor function, i.e., it is the largest integer not greater than $x$. 
Thus, we have 
\beq
\label{VW}
\beta^{k_\ast(x)+1}x<L\leq \beta^{k_\ast(x)}x\quad\forall x\geq L,\eeq
and 
\beq\label{B_TrecE07}
k^\ast(\beta^{k_\ast(t)+1}t)=1.
\eeq
Then, we infer from \eqref{VW} that
$$
\begin{array}{cc}
\beta^j t\geq L& \forall  j\leq k_\ast(t),\\ \\
\beta^j t< L & \forall j\geq k_\ast(t)+1.
\end{array}
$$
By using \eqref{B_TrecE05} and \eqref{B_TrecE06}, it follows that
\bea
\D\tilde\Phi_m[\tilde\varphi](t/\beta_0)&=&
\D\sum_{j=k_\ast(t)+1}^ma_j\varphi_{k^\ast(\beta^jt)}\left(\beta^jt\right)\nonumber\\ 
&=&
\D\sum_{j=1}^{m-k_\ast(t)}a_{j+k_\ast(t)}\varphi_{k^\ast(\beta^{j+k_\ast(t)}t)}
\left(\beta^{j+k_\ast(t)}t\right).\nonumber
\eea
From \eqref{B_TrecE07}, we have
$$
k^\ast(\beta^{j+k_\ast(t)}t)=j    \quad\forall j\geq 1,
$$
and hence, from \eqref{B_Dfunc:g1}-\eqref{B_Dfunc:gj01}, with $k+1=m-k_\ast(t)$, 
we obtain
\bea
\D\tilde\Phi_m[\tilde\varphi](t/\beta_0)
&=&\D\sum_{j=1}^{m-k_\ast(t)}a_{j+k_\ast(t)}\varphi_{j}\left(\beta^{j+k_\ast(t)}t\right) \nonumber\\ 
&=&\D\sum_{j=1}^{m-k_\ast(t)-1}a_{j+k_\ast(t)}\varphi_{j}\left(\beta^{j+k_\ast(t)}t\right)+a_{m}\varphi_{m-k_\ast(t)}\left(\beta^{m}t\right)\nonumber\\ 
&=&\D\sum_{j=1}^{m-k_\ast(t)-1}a_{j+k_\ast(t)}\varphi_{j}\left(\beta^{j+k_\ast(t)}t\right)\nonumber \\
&+&\D\left(g(t)-\sum_{j=1}^{m-k_\ast(t)-1}a_{k_\ast(t)+j}\varphi_j\left(\beta^{j+k_\ast(t)}t\right)\right)\nonumber\\ 
&=&g(t). \nonumber
\eea 
It remains to  prove \eqref{B_TrecE02}. 
Replacing \eqref{B_Dfun:g} in \eqref{B_TrecE01} and using \eqref{rela:Phi:Im}, we get
\bea\label{B_TrecE08}
\D\tilde\Phi_m[\tilde\varphi](t/\beta_0)&=&
\D\frac{\tilde{I}_m[\rho](t^2/\beta_0^2) - 
\tilde I_m[\rho](L_m^2)}{J_0F(c_0)} \nonumber\\
&=& 
\D \tilde\Phi_m[\varphi](t/\beta_0) - 
\frac{\tilde I_m[\rho](L_m^2)}{J_0F(c_0)} \quad\forall t\in\left[0, \beta_0L_m\right],
\eea
where $\varphi(x) = \int_0^x\rho(\tau)d\tau$. Using 
$\tilde\Phi _m[1]=1$ and  Theorem~\ref{B_Tiden02} we obtain \eqref{B_TrecE02}, i.e.
$$
\tilde\varphi(x) = \varphi(x)-\frac{\tilde I_m[\rho](L_m^2)}{J_0F(c_0)} 
\quad \forall x\in\left[0, L\right].
$$  
This completes the proof of Theorem \ref{B_Trec}.
\end{proof}

\smallskip
\begin{proof} [\bf Proof of Theorem \ref{B_Tstab03}] 
\label{B_Tnorms}
Let $\rho$ be a function in ${C}^0([0,L]),$ and 
let $\{\varphi_j\}_{j\geq 1}$ be defined in \eqref{B_Dfunc:g1}-\eqref{B_Dfunc:gj02}.
Using \eqref{B_TrecE02}, we obtain
\beq\label{B_TstabE001}
\tilde \varphi(x)=\varphi(x)-\varphi(L) \quad\forall x \in[0,L],
\eeq
where $\varphi=\int_0^x\rho(\tau)d\tau$ and 
$\tilde\varphi$ has been defined in \eqref{B_TrecE00}.

Recall that the family of norms $ || \cdot ||_{ [a,b) }$ (for $0\le a<b<\infty$)   satisfies \eqref{B_Dnorm01}-\eqref{B_Dnorm02}.   
Using \eqref{B_TrecE00}, \eqref{B_TrecE04}-\eqref{B_TrecE06}
 and \eqref{B_TstabE001},  we obtain 
\beq\label{B_Tstab03E01}
\norm{\varphi(\cdot)-\varphi(L)}_{[\beta^{k+1}L,\beta^kL)} = 
\norm{\varphi_{k+1}}_{[\beta^{k+1}L,\beta^kL)}.
\eeq
Let us prove that for any $k\geq 0,$ we have
\beq\label{B_Tstab03E02}
 \norm{\varphi_{k+1}}_{[\beta^{k+1}L,\beta^kL)}\leq 
 \frac{C(\beta^m)}{a_m^{k+1}}\norm{g}_{[\beta^{k+1}\beta_0L_m,\beta_0L_m)}.
 \eeq
The proof of   \eqref{B_Tstab03E02} is done by  induction on $k$.
\\ \\
{\bf Case $k=0$.} Using \eqref{B_Dfunc:g1} and 
\eqref{B_Dnorm01}-\eqref{B_Dnorm02}, we have 
$$
\norm{\varphi_{1}}_{[\beta L,L)}\leq 
\frac{C(\beta^m)}{a_m}\norm{g}_{[\beta \beta_0L_m,\beta_0L_m)},
 $$
 as desired.

Assume now that for all $j=1,...,k$ (with $k\ge 1$), we have 
 \beq\label{B_Tstab03E03}
 \norm{\varphi_{j}}_{[\beta^j L,\beta^{j-1}L)}\leq 
 \frac{C(\beta^m)}{a_m^{j}}\norm{g}_{[\beta^{j}\beta_0L_m,\beta_0L_m)},
 \eeq
 and let us  prove \eqref{B_Tstab03E02}.
 \\ \\
 {\bf Case $k+1\leq m.$}  
 Using \eqref{B_Dfunc:gj01} and  \eqref{B_Dnorm01}-\eqref{B_Dnorm02}, 
 we obtain   
 \begin{multline*}
 \norm{\varphi_{k+1}}_{[\beta^{k+1}L,\beta^kL)} \leq 
 \frac{1}{a_m}\left(C(\beta^m)\norm{g}_{[\beta^{k+1}\beta_0L_m,\beta^k\beta_0L_m)} \right.\\ \left. + 
 \sum_{j=1}^k a_{m-k-1+j} C\left(\frac{\beta^{k+1}}{\beta^{j}}\right)
 \norm{\varphi_{j}}_{[\beta^jL,\beta^{j-1}L)}\right).
 \end{multline*}
 Using induction hypothesis \eqref{B_Tstab03E03},  we have 
 $$
 \begin{array}{l} 
\D \norm{\varphi_{k+1}}_{[\beta^{k+1}L,\beta^kL)}\leq\\ \\
\D \frac{1}{a_m}\left(C(\beta^m)\norm{g}_{[\beta^{k+1}\beta_0L_m,\beta^k\beta_0L_m)} + 
\sum_{j=1}^k a_{m-k-1+j} C\left(\frac{\beta^{k+1}}{\beta^{j}}\right)
\frac{C(\beta^m)}{a_m^{j}}\norm{g}_{[\beta^{j}\beta_0L_m,\beta_0L_m)}\right) \\ \\
 \leq
 \D\frac{C(\beta^m)}{a_m^{k+1}}\left(a_m^k\norm{g}_{[\beta^{k+1}\beta_0L_m,\beta^k\beta_0L_m)} +\sum_{j=1}^k a_{m-k-1+j} C\left(\frac{\beta^{k+1}}{\beta^{j}}\right)\norm{g}_{[\beta^{j}\beta_0L_m,\beta_0L_m)}\right)\\ \\
 \leq
 \D\frac{C(\beta^m)}{a_m^{k+1}}\left(a_m^k+\sum_{j=1}^k a_{m-k-1+j} C\left(\frac{\beta^{k+1}}{\beta^{j}}\right)\right)\norm{g}_{[\beta^{k+1}\beta_0L_m,\beta_0L_m)} 
 .\end{array}
 $$
Note  that
\beq\label{B_Tstab03E04}
C(u)\leq 1 \quad\forall u\in(0,1),\eeq
for $C(\cdot)$ is nondecreasing and $C(1)=1$. 
Therefore, 
$$
C\left(\frac{\beta^{k+1}}{\beta^{j}}\right)\leq 1\quad\forall j\in\{1,...,k\}.
$$
Thus, we obtain
$$
\D \norm{\varphi_{k+1}}_{([\beta ^{k+1} ,\beta ^k L)}\leq
\D\frac{C(\beta^m)}{a_m^{k+1}}\norm{g}_{ [\beta^{k+1}\beta_0L_m,\beta_0L_m)}.
$$
This proves \eqref{B_Tstab03E02} for all $k=\{0,...,m-1\}$.
\\ \\
{\bf Case $k+1>m$.}  Replacing  $\varphi _{k+1}$ by its expression in \eqref{B_Dfunc:gj02} and using \eqref{B_Dnorm01}-\eqref{B_Dnorm02} 
and the induction hypothesis, we obtain
$$
\begin{array}{l} 
\D \norm{\varphi_{k+1}}_{[\beta^{k+1}L,\beta^kL)}\leq\\ \\
\D \frac{1}{a_m}\left(C(\beta^m)\norm{g}_{[\beta^{k+1}\beta_0L_m,\beta^k\beta_0L_m)}+\sum_{j=1}^{m-1} a_{j} C\left(\frac{\beta^{m}}{\beta^{j}}\right)\frac{C(\beta^m)}{a_m^{j+k-m+1}}\norm{g}_{[\beta^{j+k-m+1}\beta_0L_m,\beta_0L_m)}\right) \\ \\
 =
 \D\frac{C(\beta^m)}{a_m^{k+1}}\left(a_m^k\norm{g}_{[\beta^{k+1}\beta_0L_m,\beta^k\beta_0L_m)}+\sum_{j=1}^{m-1} a_{j} C\left(\frac{\beta^{m}}{\beta^{j}}\right)\norm{g}_{[\beta^{j+k-m+1}\beta_0L_m,\beta_0L_m)}\right)\\ \\
 \leq
 \D\frac{C(\beta^m)}{a_m^{k+1}}\left(a_m^k+\sum_{j=1}^{m-1} a_{j} C\left(\frac{\beta^{m}}{\beta^{j}}\right)\right)\norm{g}_{[\beta^{k+1}\beta_0L_m,\beta_0L_m)} 
 ,\end{array}
$$
and with \eqref{B_Tstab03E04} we infer that 
$C({\beta^m}/{\beta^j})\leq 1$ for all $j\in\{1,...,m-1\}.$ 
This completes the proof of \eqref{B_Tstab03E02}.

\smallskip
On the other hand, using \eqref{B_TrecE01} and \eqref{B_Dnorm02}, we obtain
 $$
 \norm{g}_{[\beta^{k+1}\beta_0L_m,\beta_0L_m)} \le 
 C(\beta_0)\norm{\tilde\Phi_m[\tilde\varphi]}_{[\beta^{k+1}L_m,L_m)}.
 $$
By replacing \eqref{B_TstabE001} in \eqref{B_Tstab03E02}, we obtain
$$
\norm{\varphi_{k+1}}_{[\beta^{k+1}L,\beta^kL)}\leq C(\beta _0) 
\frac{C(\beta^m)}{a_m^{k+1}}
\norm{\tilde\Phi_m[\varphi](\cdot)-\tilde\Phi_m[\varphi](L_m)}_{[\beta^{k+1}L_m,L_m)},
$$
and by replacing in \eqref{B_Tstab03E01}, we obtain  \eqref{B_Tstab03E00}. 
This completes the proof of Theorem~\ref{B_Tstab03}.
 
\end{proof}

\section{Numerical results}
\
\par

In this section we discuss the numerical implementation of the scheme developed 
when proving Theorem~\ref{B_Trec}.

\smallskip
Firstly,  we define $\{\alpha\}_{j=1}^m$ as in \eqref{B_Halpha02}, 
and let
$$
F_j=\left\{
\begin{array}{ll}
\D 0&\;\; j=0, \\ \\
\D F\left(\frac{\alpha_j+\alpha_{j+1}}{2}\right)&\;\; j=1,...,m-1, \\ \\
\D F(c_0) &\;\; j=m,
\end{array}\right.
$$
where $F$ is Hill's function defined in \eqref{Int:Dhillfunc}.
Next, we set
\beq
a_j=\frac{F_j -F_{j-1}}{F(c_0)}, \qquad  j=1,...,m.
\eeq
Since $F$ is increasing and $0\leq F(x)< 1$ for all $x\geq 0,$ 
we infer that $a_j > 0$ for all $j=1,...,m$, and that
$$
\sum_{j=1}^ma_j=1.
$$ 

\smallskip
The corresponding approximation $F_m$ of  Hill's function is shown graphically in Figure \ref{fig:HillAprox}.
\begin{figure}[htc]
   \begin{center}
    \includegraphics[width=0.6\textwidth]{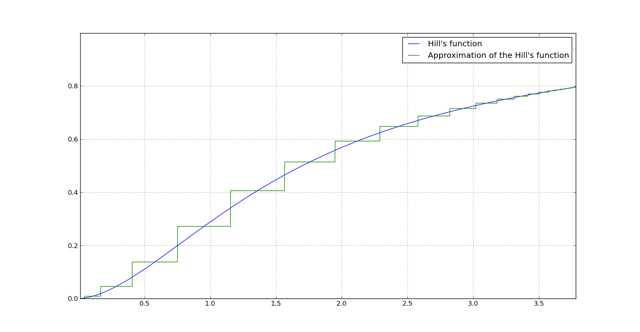}
  \caption{Hill's function and its approximation}
  \label{fig:HillAprox}
  \end{center}
\end{figure}

\smallskip
Recall that a non-regular mesh in the interval $[0,L]$ was introduced when proving
Theorem~\ref{B_Trec}. Now, let us start defining  
$$
\mathcal{P}_{q,1}=\left\{ (x_0,x_1, \ldots , x_q)\in \R^{q+1} 
\ : \;\; x_{j}\in[\beta L,L),\; x_0=\beta L,\;x_{j-1}<x_{j},\;\forall j=1,...,q\right\},
$$
and its representative vector 
$$
{\bf P}_{1}=\left(
x_0,
x_1,
\ldots ,
x_q
\right)\in\R^{q+1}.
$$ 
Next, introduce the sets
$$
\mathcal{P}_{q,j}=\left\{x\Big |\;\; \beta^{j-1} x\in \mathcal{P}_{q,1}\right\},\qquad j\ge 1
$$ 
and denote their corresponding representative vectors by
$$
{\bf P}_{j} = \beta^{j-1}{\bf P}_{1} = \beta{\bf P}_{j-1}\in\R^{q+1}.
$$ 
Let us fix some $p\ge 1$. Our aim is to recover the  function $\rho$  on the mesh 
\beq \label{nonRegularMesh}
\Sigma_{p,q}=\D\cup_{j=1}^p\mathcal{P}_{q,j},\eeq 
where the  corresponding representative vector is given by
$$
{\bf P}=\left(
{\bf P}_1,
{\bf P}_2,
\ldots ,
{\bf P}_{p}
\right)\in\R^{p+q+1}.
$$ 
By using \eqref{B_Dfunc:g1}-\eqref{B_Dfunc:gj02}, we can define the vectors  
${\bf G}_1,{\bf G}_2,..., {\bf G}_m\in\R^{q+1}$ inductively as follows:
\beq
\left({\bf G}_1\right)_s = 
\frac{1}{a_m}g\left(\frac{\left({\bf P}_{1}\right)_s}{\beta^m}\right),\;\; s=1,...,q+1,
\eeq
and for $k=1, ...  ,m-1$: 
\beq
\left({\bf G}_{k+1}\right)_s=
\D\frac{1}{a_m}\left(g\left(\frac{({\bf P}_{k+1})_s}{\beta^m}\right) - 
\sum_{j=1}^k a_{m-k-1+j} ({\bf G}_{j})_s\right) \quad  s=1,...,q+1.
\eeq 
Finally, we define the vectors ${\bf G}_k\in\R^{q +1}$ for $k= m,...,p-1,$ by  
\beq
({\bf G}_{k+1})_s = 
\frac{1}{a_m}\left(g\left(\frac{({\bf P}_{k+1})_s}{\beta^m}\right) - 
\sum_{j=1}^{m-1} a_{j} \left({\bf G}_{j+k-m+1}\right)_s\right) \quad s=1, ... , q+1. 
\eeq
Introduce the vector
$$
{\bf G}=\left(
{\bf G}_1,
{\bf G}_2,
\cdot
\cdot
\cdot,
{\bf G}_{p}
\right)\in\R^{p + q  + 1},
$$
which represents a discretization of the function $\tilde\varphi$ given by 
Theorem \ref{B_Trec} on the mesh defined by $\Sigma_{p,q},$; that is,
$\left(({\bf P})_s,({\bf G})_s\right)_{s=1}^{p+q+1}$ is a discretization of the 
curve $(x,\tilde\varphi(x)),$  $x\in(0,L).$ 

\smallskip
Therefore, using \eqref{B_TrecE02} and applying a forward difference scheme, 
we obtain an approximation of the curve $(x,\rho (x))$, $x\in (0,L)$,  through the vectors 
${\bf X},{\bf Y}\in\R^{p+q}$ given by
\begin{equation}
({\bf X})_s = 
({\bf P})_s,\quad ({\bf Y})_s = 
\max\left\{\frac{({\bf G})_{s+1}-({\bf G})_{s}}{({\bf P})_{s+1}-({\bf P})_{s}},0\right\}
\quad  s=1,...,p+q.
\label{ZYX}
\end{equation}
It should be noted that the maximum function was considered in \eqref{ZYX} because of  the positivity 
restriction on the density function. 

\smallskip
\subsection{Examples} 
Let us consider 
\beq\label{targetFunc}
\rho(x)=  \frac{8a^8x^7}{(x^8+a^8)^2},
\qquad
\varphi(x)=\int_0^x\rho(\tau)d\tau=  \frac{x^8}{x^8+a^8},
\eeq
with  $a=1.5$. Figure \ref{fig:reconstruc:Ejem} shows 
functions $\rho (x)$ and $\varphi (x)$ defined in   \eqref{targetFunc} and their approximations obtained by the previous 
procedure. 

\begin{figure}[h] 
\begin{center}
\begin{minipage}[c]{7cm} 
\centering 
 {\bf A}: Function $\rho$.
\end{minipage}
\begin{minipage}[c]{7cm} 
\centering 
{\bf B}:  Function $\varphi$.
\end{minipage}
\end{center}
\begin{minipage}[l]{7cm} 
\includegraphics[width=1.1\textwidth]{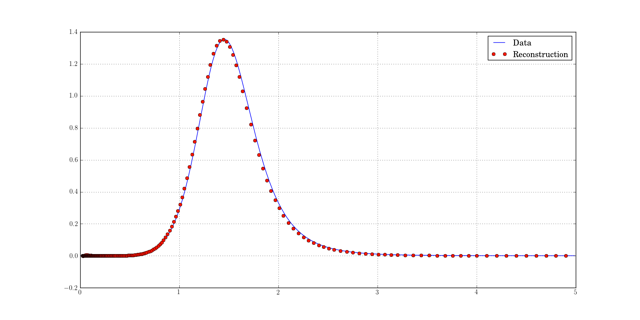} 
\end{minipage}
\begin{minipage}[r]{7cm} 
\includegraphics[width=1.1\textwidth]{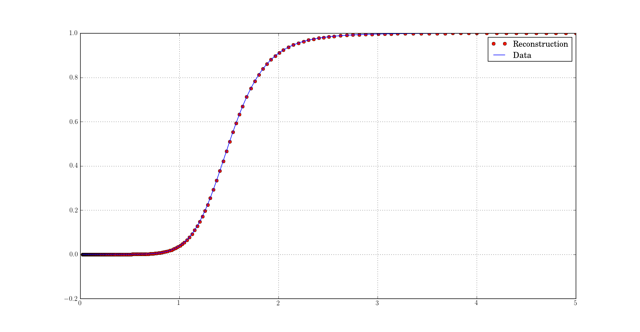} 
\end{minipage} 
\caption{The target functions $\rho$ and $\varphi$ with their approximations.}
\label{fig:reconstruc:Ejem}
\end{figure}
\begin{remark} If we consider any discretization of \eqref{B_Dfunc01}  on a given mesh, one has to solve a  system  like 
 $$
 A\vec{y}=\vec g.
 $$
 Obviously, the system depends strongly on the choice of the mesh.
  
 We notice  that the matrix $A$ may not be invertible. 
 While it is difficult to give a general criterion for the invertibility of  $A$ in terms of the mesh, Theorem \ref{B_Trec} guarantees
 that the matrix $A$ is indeed invertible when the non-regular mesh described in \eqref{nonRegularMesh} is used for the discretization of system \eqref{B_Dfunc01}. 
\end{remark}
\smallskip

Let us now consider the example studied in \cite{Bio:FrenchGroetsch}. 
To this end, we define
\beq \label{B_funcFrench}
I(t)=\left\{ 
\begin{array}{ll}
0, &  t\in (0,t_{Delay}) ,\\ \\
\D I_{Max} \left[1+ 
\left(\frac{K_I}{t-t_{Delay}}\right)^{n_I} \right]^{-1}, & t>t_{Delay}
\end{array}
\right.
\eeq 
with $t_{Delay} = 30 [ms]$, $n_I \simeq 2.2$, $I_{Max} = 150 [pA]$ and 
$K_I \simeq 100 [ms]$. The current given in  \eqref{B_funcFrench} is a sigmoidal function 
with short delay (Figure \ref{fig:reconstruc:01}{\bf B}), which is similar to the profiles encountered
in some practical situations (see e.g., \cite{Bio:Chen}, \cite{Bio:Flannery} or 
\cite{Bio:Koutalos}).
\begin{figure}[h] 
\begin{center}
\begin{minipage}[c]{7.5cm} 
\centering 
 {\bf A}: approximation of $\rho$.
\end{minipage}
\begin{minipage}[c]{6.5cm} 
\centering 
{\bf B}: current $I(t)$ as defined by \eqref{B_funcFrench}
\end{minipage}
\end{center}
\begin{minipage}[l]{7cm} 
\includegraphics[width=1.\textwidth]{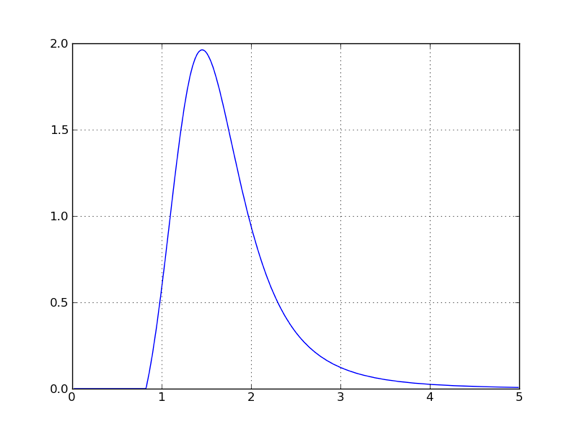} 
\end{minipage}
\begin{minipage}[r]{7cm} 
\includegraphics[width=1.\textwidth]{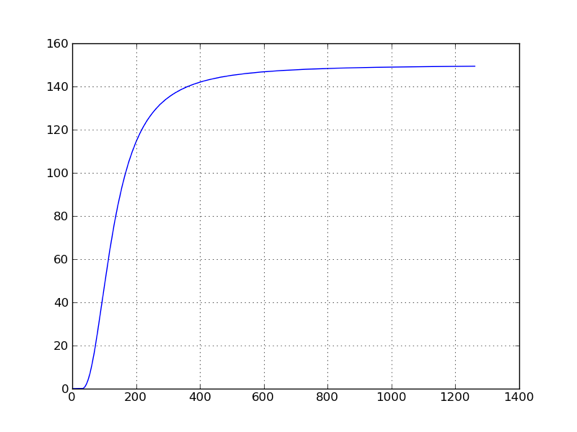} 
\end{minipage} 
\caption{Approximation of the function $\rho(x)$ with current $I(t)$ as defined by \eqref{B_funcFrench}}
\label{fig:reconstruc:01}
\end{figure}

The numerical solution corresponding to these data is shown in 
Figure~\ref{fig:reconstruc:01}{\bf A}. It should be noted that the numerical solution given here is 
perfectly consistent with those obtained in \cite{Bio:Flannery}. 

\section {Polynomial approximation of  Hill's function}\label{Int:PolinomialSection}
\
\par

In this section we consider the same inverse problem with another approximation of the kernel in \eqref{ABC}, for which we keep 
the function $c$ and replace Hill's function $F$ by a polynomial approximation around $c_0$. 
 
More precisely, let $P_m$ be the standard Taylor polynomial expansion of degree $m$ 
of \eqref{Int:Dhillfunc} around $c_0$; that is, $P_m\in \R [X]$, deg$(P_m )\le m$ and 
\beq
F(x)=P_m(x-c_0)+O(|x-c_0|^{m+1}).
\eeq
A new approximation for the kernel is defined by 
\beq\label{B_DkerAprox02}
PK_m(t,x)=P_m(c(t,x)-c_0),
\eeq
where $c(t,x)$ is the solution of \eqref{Int:Emodel2}, given by 
\beq
c(t,x)=c_0-c_0  \left( \frac{2}{L} \right)^{\frac{1}{2}} \left(\sum_{k=0}^{+\infty}\frac{e^{-\mu_k^2Dt}}{\mu_k}\psi_k(x)\right),
\eeq
with 
\beq\label{B_Emu:k}
\mu_k=\frac{2k+1}{2L}\pi,
\eeq  
and
\beq
\psi_k(x)=\left(\frac{2}{L}\right)^{1/2}\sin(\mu_k x).
\eeq  
Besides,  we define the total current associated with this polynomial 
approximation as follows:
\beq\label{B_DpoliCurrent}
PI_m[\rho](t)=\int_0^L\rho(x)PK_m(t,x)dx \quad\forall t>0.
\eeq

Next, we present our main result regarding the operator $PI_M$;
it asserts that the identifiability for the operator $PI_m$ holds when $m\le 8$.  
\begin{theorem} \label{B_Tiden03} Let $m\leq  8$ be a given integer. 
Then
$$
\textrm{Ker }PI_m=\{0\},
$$
where  
$$
\textrm{Ker }PI_m=\left\{f\in L^2(0,L)\Big|\;PI_m[f](t)=0\ \forall t>0 \right\}.
$$
\end{theorem}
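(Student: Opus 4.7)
The plan is to expand $PI_m[\rho](t)$ as a generalized Dirichlet series in $t$, invoke the linear independence of exponentials with distinct decay rates to extract the coefficients, and then use a parity argument modulo $8$ to isolate a single Fourier coefficient $\int_0^L \rho\, \psi_k\, dx$ for each $k$.

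Concretely, writing $p_n := F^{(n)}(c_0)/n!$ so that $P_m(h) = \sum_{n=0}^m p_n h^n$, and $\gamma_k := -c_0\sqrt{2/L}/\mu_k$, one has $c(t,x) - c_0 = \sum_{k \ge 0} \gamma_k e^{-\mu_k^2 Dt} \psi_k(x)$. I would raise this series to the $n$-th power, integrate against $\rho$, and use Fubini (justified for $t>0$ because $\|\psi_k\|_{L^\infty} = \sqrt{2/L}$ and $\sum_k e^{-\mu_k^2 Dt}/\mu_k < \infty$), then regroup by the value $M := \sum_{j=1}^n (2k_j+1)^2$ to obtain
\[
PI_m[\rho](t) = p_0 \int_0^L \rho(x)\, dx + \sum_{M \ge 1} C_M(\rho)\, e^{-M \pi^2 D t / (4L^2)},
\]
where $C_M(\rho)$ is a finite combination (over $1 \le n \le m$ and tuples $(k_1,\ldots,k_n)$ with $\sum_j (2k_j+1)^2 = M$) of the quantities $p_n\, \gamma_{k_1}\cdots\gamma_{k_n}\int_0^L \rho \prod_j \psi_{k_j}\,dx$. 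The hypothesis $PI_m[\rho]\equiv 0$ on $(0,\infty)$, together with the uniqueness of representations by exponentials of distinct decay rates, then forces $p_0 \int_0^L \rho\, dx = 0$ and $C_M(\rho) = 0$ for every $M \ge 1$.

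The arithmetic heart of the argument is the identity $(2k+1)^2 = 4k(k+1) + 1 \equiv 1 \pmod 8$, which implies that a sum of $n$ odd squares is $\equiv n \pmod 8$. Hence, for $1 \le n \le m \le 8$, the possible values of $M$ corresponding to different $n$'s lie in disjoint residue classes mod $8$; and for $M$ equal to a single odd square $(2k+1)^2$ the only admissible representation with $n \le 8$ is $n=1$ with the single index $k$. Therefore $C_{(2k+1)^2}(\rho)$ reduces to a single term proportional to $p_1\, \gamma_k \int_0^L \rho\, \psi_k \, dx$. Since $p_1 = F'(c_0) > 0$ and $\gamma_k \neq 0$, this forces $\int_0^L \rho\, \psi_k\, dx = 0$ for every $k \ge 0$; because $\{\psi_k\}_{k \ge 0}$ is an orthonormal basis of $L^2(0,L)$, we conclude $\rho \equiv 0$.

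The main obstacle is the bookkeeping of the $n$-fold expansion and the rigorous justification of rearranging the multiple series by the value of $M$; once the Dirichlet-series representation is in place, the remaining content is the elementary mod-$8$ observation, which is precisely what restricts the argument to $m \le 8$ (for $m = 9$, the collision $9 = 3^2 = 1^2 + \cdots + 1^2$ with nine summands would defeat the isolation of $\int_0^L \rho\, \psi_1\, dx$).
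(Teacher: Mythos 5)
Your proposal is correct and follows essentially the same route as the paper: the same eigenfunction expansion of $c(t,x)-c_0$ regrouped into a generalized Dirichlet series, the same extraction of coefficients via the distinctness of decay rates (which the paper carries out by an explicit induction after ordering the exponents), and the same mod-$8$ observation that a sum of $2\le n\le 8$ odd squares is never an odd square, isolating the $n=1$ term and hence each Fourier coefficient of $\rho$. The only cosmetic difference is that the paper shifts time by a fixed $\varepsilon>0$ to secure absolute summability of the coefficient family before the limit arguments, which is exactly the bookkeeping point you flag as the main obstacle.
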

 
\subsection{Proof of Theorem \ref{B_Tiden03}.}
Let us start noting that $\{\psi_k\}_{k\geq 0}$ is an orthonormal basis in 
$L^2(0,L)$. Thus, for any $f\in L^2(0,L),$ we can write 
\beq\label{B_EespecPhi}
f(x) = \sum_{k\geq0}<f,\psi_k>\psi_k(x) \qquad\textrm{ in } \  L^2(0,L),
\eeq
where 
$$
<f,\psi_k>=\int_0^Lf(x)\psi_k(x)dx.
$$

 We write 
$$
P_m(z) = \alpha_0 + \alpha_1z + \cdot\cdot\cdot + \alpha_mz^m,
$$
where $\alpha_j\in\R$, for all $j=0,1,...,m$, and introduce 
the set 
\beq
\Lambda _m = \left\{\sum_{j=1}^k \mu^2_{n_j}\;\Big| n_j\geq 0,\;\,1\leq k\leq m  \right\}.
\eeq

Let $\varepsilon>0$ be a given positive constant. For any $\rho \in L^2(0,L)$ and
for all $s\geq0$, we have 
\bea
PI_m[\rho](\varepsilon+s)&=&
\D\alpha_0\int_0^L\rho(x)dx  \nonumber \\ &+ &
\sum_{j=1}^m\alpha_j(- c_0\sqrt{\frac{2}{L}} )^j \int_0^L\left(\sum_{k\geq 0}  
\frac{e^{-\D\mu^2_{k}D(\varepsilon+s)}}{\mu_{k}}\psi_{k}(x)\right)^j\rho(x)dx\nonumber\\ 
&=&\D\alpha_0\int_0^L\rho(x)dx\nonumber\\ 
&+&\D\sum_{j=1}^m\alpha_j(-c_0\sqrt{\frac{2}{L}}   )^j \sum_{k_1,...,k_j\geq 0} e^{-\D\sum_{p=1}^j\mu^2_{k_p}D(s+ \varepsilon ) }
\int_0^L (\Pi_{p=1}^j   \frac{\psi_{k_p}(x)}{\mu _{k_p}} ) \rho(x)dx. \nonumber
\eea
Note that the convergence of the last series is fully justified, as for any $j\in \{ 1,...,m\}$ and any $s\ge 0$, we have
\begin{eqnarray*}
\sum_{k_1,...,k_j\ge 0} e^{-\D\sum_{p=1}^j\mu^2_{k_p}D(s+ \varepsilon ) }
\left\vert \int_0^L  ( \Pi_{p=1}^j  \frac{\psi_{k_p}(x)}{\mu _{k_p}} ) \rho(x)dx \right\vert  
&\le& || \rho ||_{L^1(0,L)} \left( \sum_{k\ge 0} \sqrt{ \frac{2}{L} } \frac{e^{-\mu_k ^2 D  \varepsilon }}{\mu _k} \right)^j\\
&<&\infty 
\end{eqnarray*}

Therefore, there is a family $\{a_\lambda(\varepsilon,\rho)\}_{\lambda\in\Lambda_m}$
such that 
\begin{equation}
\label{sum}
\sum_{\lambda \in \Lambda _m}  | a_\lambda (\varepsilon , \rho ) | <\infty
\end{equation}
 and 
\beq\label{B_Da:lambda}
\begin{array}{ccl}
PI_m[\rho](\varepsilon+s)
&=&\D \alpha_0\int_0^L\rho(x)dx + 
\sum_{\lambda \in \Lambda _m}a_\lambda(\varepsilon,\rho) e^{-\lambda Ds } \quad\forall s\geq0.
\end{array}
\eeq

\begin{lemma}\label{B_Lemma08} Let $\rho\in L^2(0,L)$ be a given function, such that 
\beq\label{B_L08E01}
PI_m[\rho](t) = 0 \quad\forall t>0.
\eeq
Then
\beq\label{B_L08E02}
\int_0^L\rho(x)dx = 0
\eeq
and 
\beq\label{B_L08E03}
a_\lambda(\varepsilon,\rho) = 0 \quad\forall\lambda\in\Lambda_m.
\eeq
\end{lemma}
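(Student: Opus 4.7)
The plan is to exploit the generalized Dirichlet series representation \eqref{B_Da:lambda} of $PI_m[\rho](\varepsilon+s)$ together with the absolute summability \eqref{sum}. The hypothesis \eqref{B_L08E01} forces this series to vanish identically in $s\ge 0$, and I would use a standard Dirichlet-series uniqueness argument: first extract the constant term by letting $s\to+\infty$, then peel off the exponentials one at a time in order of increasing frequency.

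For \eqref{B_L08E02}, each $\lambda\in\Lambda_m$ is strictly positive since $\mu_k^2>0$ for every $k\ge 0$, so $e^{-\lambda Ds}\to 0$ as $s\to+\infty$. Because each term of the series in \eqref{B_Da:lambda} is bounded in absolute value by $|a_\lambda(\varepsilon,\rho)|$, and \eqref{sum} makes this a summable majorant, dominated convergence yields
$$
0=\lim_{s\to+\infty}PI_m[\rho](\varepsilon+s)=\alpha_0\int_0^L\rho(x)dx.
$$
Since $\alpha_0=P_m(0)=F(c_0)=c_0^n/(c_0^n+K_{1/2}^n)>0$, conclusion \eqref{B_L08E02} follows at once.

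To establish \eqref{B_L08E03}, I would first enumerate $\Lambda_m$. Every $\lambda\in\Lambda_m$ is a sum of at most $m$ terms $\mu_{n_j}^2\ge\mu_0^2=(\pi/(2L))^2>0$, so for any $M>0$ only finitely many such sums lie below $M$; hence $\Lambda_m$ is discrete in $(0,+\infty)$ and can be listed in strictly increasing order $\lambda^{(1)}<\lambda^{(2)}<\cdots$ with $\lambda^{(j)}\to+\infty$. Writing $b_j:=a_{\lambda^{(j)}}(\varepsilon,\rho)$, the residual identity from the previous step reads $\sum_{j\ge 1}b_j e^{-\lambda^{(j)}Ds}=0$ for all $s\ge 0$. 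I would then prove $b_j=0$ for every $j$ by induction on $j$. Multiplying by $e^{\lambda^{(1)}Ds}$ gives $b_1+\sum_{j\ge 2}b_j e^{-(\lambda^{(j)}-\lambda^{(1)})Ds}=0$; each tail term is dominated by $|b_j|$, so \eqref{sum} and dominated convergence force $b_1=0$ upon sending $s\to+\infty$. The inductive step is identical after multiplication by $e^{\lambda^{(k+1)}Ds}$. The only subtle point in the entire argument is the limit-sum interchange, which is fully handled by \eqref{sum}; everything else is elementary.
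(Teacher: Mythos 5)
Your proposal is correct and follows essentially the same route as the paper: extract the constant term by letting $s\to+\infty$, then enumerate $\Lambda_m$ in increasing order and peel off the exponentials inductively, using the absolute summability \eqref{sum} to justify the limit--sum interchange. The only cosmetic difference is that you invoke dominated convergence term by term while the paper factors $e^{-\lambda_j Ds}$ out of the entire tail to get a uniform exponential bound; both are valid, and your explicit remark that $\Lambda_m$ is discrete is a point the paper leaves implicit.
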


\smallskip\
\begin{proof}
First,  the series in  \eqref{B_Da:lambda} is uniformly  convergent for $s\ge 0$, by \eqref{sum}. 

\smallskip
Define $\{\lambda_k\}_{k\geq 1}$ as
\bea\label{B_def:Lambda01}
\lambda_{1} = \D\min\Big\{\lambda\in \Lambda _m\Big\},\qquad
\lambda_{k+1} = \D\min\Big\{\lambda\in \Lambda _m\setminus\{\lambda_1,...,\lambda_k\}\Big\},
\eea
and note that  this defines an increasing sequence $0<\lambda_1<\lambda_2<\cdots$

\smallskip
Using definition \ref{B_def:Lambda01}, 
we can rewrite  \eqref{B_Da:lambda} as 
\beq\label{B_L08E04}
PI_m[\rho](\varepsilon+s) =
\D \alpha_0\int_0^L\rho(x)dx + 
\sum_{k\geq 1}a_{\lambda_k}(\varepsilon,\rho) 
e^{-\lambda_k D\varepsilon }e^{-\lambda_k Ds } \quad\forall s\geq 0.\qquad 
\eeq
Set  
$$
S_j(s) = \sum_{k\geq j}a_{\lambda_k}(\varepsilon,\rho) 
e^{-\lambda_k D\varepsilon }e^{-\lambda_k Ds } \quad\forall s\geq 0.
$$

Then we have that
\begin{equation}
\label{EDF}
|S_j(s)| \leq 
e^{-\lambda_j Ds}\left(\sum_{k\geq j}\left|a_{\lambda_k}(\varepsilon,\rho)
\right| e^{-\lambda_k D\varepsilon }\right).
\end{equation}
Plugging  \eqref{B_L08E04} into  \eqref{B_L08E01}, it follows that 
\beq\label{B_L08E06}
\alpha_0\int_0^L\rho(x)dx+S_1(s)=0 \quad\forall s>0.
\eeq
Passing to the limit as $s \to+\infty$ in \eqref{B_L08E06}, and noting that 
$\alpha_0 = F(c_0)\neq 0$ and that $S_1(s)\to 0$ by \eqref{EDF},  we obtain \eqref{B_L08E02}. 

\smallskip
The proof of  \eqref{B_L08E03} is done by induction on $j\ge 1$.  

\smallskip
{\bf Case j=1.} Plugging \eqref{B_L08E02} in \eqref{B_L08E06} and multiplying 
by $e^{\lambda_1Ds},$  we obtain
\beq\label{B_L08E07}
a_{\lambda_1}(\varepsilon,\rho)e^{-\lambda_1D\varepsilon} + 
e^{\lambda_1Ds}S_2(s)=0 \quad\forall s>0.
\eeq
But, using \eqref{EDF}, we also have that 
$$
|e^{\lambda_1Ds}S_2(s)|\leq 
C e^{-(\lambda_2-\lambda_1)Ds}.
$$
Thus, noting that $\lambda_1<\lambda_2$ and passing to the limit as $s \to \infty $ in \eqref{B_L08E07}, we obtain
$$
a_{\lambda_1}(\varepsilon, \rho) = 0.
$$

\smallskip
{\bf Case j=n+1.} Assume that 
\beq\label{B_L08E08}
a_{\lambda_j}(\varepsilon,\rho)=0 \quad\forall j=\{1,...,n\} .
\eeq
Plugging \eqref{B_L08E08} and \eqref{B_L08E02} in \eqref{B_L08E01}, we infer that
\beq\label{B_L08E09}
a_{\lambda_{n+1}}(\varepsilon,\rho)e^{-\lambda_{n+1}D\varepsilon} + 
e^{\lambda_{n+1}Ds}S_{n+2}(s)=0, \quad\forall s>0.
\eeq
On the other hand, using \eqref{EDF}, we have that 
$$
|e^{\lambda_{n+1}Ds}S_{n+2}(s)|\leq 
C e^{-(\lambda_{n+2}-\lambda_{n+1})Ds}.
$$
Thus, noting that $\lambda_{n+1}<\lambda_{n+2}$ and passing to the limit as 
$s \to \infty$ in \eqref{B_L08E09}, we infer that
$$
a_{\lambda_{n+1}}(\varepsilon, \rho) = 0.
$$
This yields \eqref{B_L08E03}.  This complete the proof of Lemma \ref{B_Lemma08}.
\end{proof}

\begin{lemma}\label{B_Lemma09} Let $\{\mu_k\}_{k\geq 0}$ be the sequence
defined in \eqref{B_Emu:k}. Assume that
 \beq\label{B_L09E01}
 \mu^2_{n_1}+\cdot\cdot\cdot+\mu^2_{n_k} = \mu_n^2.
 \eeq
for some $k\ge 1$ and $n,n_1,...,n_k\ge 0$. Then
\beq\label{B_L09E02}
k=1\ \textrm{mod } 8.
\eeq
\end{lemma}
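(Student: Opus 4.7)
The proof plan is very short because the statement reduces to a one-line residue calculation modulo $8$. The first step is to substitute the explicit formula $\mu_k=(2k+1)\pi/(2L)$ into \eqref{B_L09E01} and clear the common factor $\pi^2/(4L^2)$. This turns the hypothesis into the Diophantine identity
\begin{equation*}
(2n_1+1)^2+(2n_2+1)^2+\cdots+(2n_k+1)^2=(2n+1)^2,
\end{equation*}
so the claim becomes a purely arithmetic statement about representations of an odd square as a sum of $k$ odd squares.

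The second and decisive step is the standard observation that every odd square is congruent to $1$ modulo $8$: writing $(2m+1)^2=4m(m+1)+1$ and noting that $m(m+1)$ is even gives $(2m+1)^2\equiv 1\pmod 8$. Applying this to both sides of the displayed identity yields
\begin{equation*}
k\equiv (2n_1+1)^2+\cdots+(2n_k+1)^2\equiv(2n+1)^2\equiv 1\pmod 8,
\end{equation*}
which is exactly \eqref{B_L09E02}.

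There is really no obstacle here: the whole content is the congruence $(2m+1)^2\equiv 1\pmod 8$ together with the fact that summing residues is additive. What one should be careful about is only formal, namely to justify dividing out $\pi^2/(4L^2)$ (which is immediate since $L>0$) and to note that the conclusion is independent of the actual values of $n$ and the $n_j$, depending only on the multiplicity $k$ of terms on the left-hand side. This lemma will then feed into the proof of Theorem~\ref{B_Tiden03} by ruling out, for $m\le 8$, any nontrivial cancellation in the family $\{a_\lambda(\varepsilon,\rho)\}_{\lambda\in\Lambda_m}$ identified in Lemma~\ref{B_Lemma08}.
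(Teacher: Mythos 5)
Your proof is correct and is essentially identical to the paper's: the paper also writes $\mu_n^2=\frac{\pi^2}{4L^2}(4\varphi(n)+1)$ with $\varphi(n)=n^2+n$ even, which is exactly your observation that every odd square is $1$ modulo $8$, and then reduces the cleared identity modulo $8$ to get $k\equiv 1$.
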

\begin{proof}
We have 
$$
\mu^2_n = \frac{\pi^2}{4L^2}(4\varphi(n)+1),
$$
where $\varphi(n) = n^2+n.$ Thus, substituting this expression of $\mu _{n_i} ^2$ in \eqref{B_L09E01} yields
$$
k+4\sum_{i=1}^k\varphi(n_i) = 1+4\varphi(n).
$$
Noticing that $\varphi(n)$ is an even number for all $n\in\N,$ 
we obtain \eqref{B_L09E02}.
\end{proof}

\smallskip

 \begin{proof} [\bf Proof of the Theorem \ref{B_Tiden03}] 

\smallskip\
Let $\rho \in L^2(0,L)$ be a given function such that \eqref{B_L08E01}  holds. From Lemma~\ref{B_Lemma08} 
we infer that  \eqref{B_L08E03} holds.  If $m\leq 8$, using Lemma \ref{B_Lemma09} 
we have that for all $n\geq 0$, all $k\in \{ 2,...,m\}$, and all  $n_1,...,n_k\ge 0$,  
$$
\mu _{n_1}^2+ \cdots + \mu _{n_k}^2 \ne \mu _n^2. 
$$
Then, with $\lambda = \mu_n^2\in\Lambda_m$, we obtain
$$
a_{\mu _n^2}(\varepsilon,\rho) = e^{-\mu_n^2D\varepsilon}\alpha _1 (-c_0\sqrt{\frac{2}{L}})
\int_0^L\frac{\psi_n(x)}{\mu _n} \rho(x)dx.
$$
Since  $\alpha _1 = F'(c_0)\neq 0$ ($F$ being increasing), we infer that 
$$
<\psi_n,\rho>=0,\;\;\forall n\geq 0,
$$
and hence, with \eqref{B_EespecPhi}, that 
$$
\rho = 0.
$$
This completes the proof of Theorem~\ref{B_Tiden03}. 
 \end{proof}

\section{Acknowledgements}
The authors were partially supported by Basal-CMM project. The third author was partially 
supported by Fondecyt Grant N$^\circ$111102  and the fourth author was  partially supported by the 
\lq\lq Agence Nationale de la Recherche\rq\rq, Project CISIFS, Grant ANR-09-BLAN-0213-02.
The authors would like to thanks the organizers of the congress {\sl \lq\lq Partial Differential 
Equations, Optimal Design and Numerics\rq\rq}, held at Centro de Ciencias de Benasque Pedro Pascual, Benasque (Spain), and to the Basque Center for Applied Mathematics-BCAM, 
Bilbao (Spain), where part of this work was developed.


\begin{thebibliography}{99}
\bibitem{Bio:Chen}
C.~Chen, T.~Nakamura, and Y.~Koutalos.
\newblock Cyclic {AMP} diffusion coefficient in frog olfactory cilia.
\newblock {\em Biophysical J.}, 76:2861--2867, 1999.

\bibitem{EG}
L. C. Evans  and R. F. Gariepy.
\newblock {\em Measure Theory and Fine Properties of Functions}.
\newblock CRC Press, Boca Raton, Ann Arbor, London, 1992.

\bibitem{Bio:Flannery}
R.~Flannery, D.~A. French, and S.~J. Kleene.
\newblock Clustering of {CNG} channels in grass frog olfactory cilia.
\newblock {\em Biophysical J.}, 91:179--188, 2005.

\bibitem{Bio:FrenchEdwards}
D.~A. French and D.~A. Edwards.
\newblock Perturbation approximation of solutions of a nonlinear inverse
  problem arising in olfaction experimentation.
\newblock {\em J. Math. Biol.}, 55:745--765, 2007.

\bibitem{Bio:FFGKK}
D.~A. French, R.~J. Flannery, C.~W. Groetsch, W.~B. Krantz, and S.~J. Kleene.
\newblock Numerical approximation of solutions of a nonlinear inverse problem
  arising in olfaction experimentation.
\newblock {\em Mathematical and Computer Modelling}, 43:945--956, 2006.

\bibitem{Bio:FrenchGroetsch}
D.~A. French and C.~W. Groetsch.
\newblock Integral equation models for the inverse problem of biological ion
  channel distributions.
\newblock {\em Journal of Physics: Conference Series}, 73(012006):1742--6596,
  2007.

\bibitem{Bio:Kleene}
S.~J. Kleene.
\newblock Origin of the chloride current in olfactory transduction.
\newblock {\em Neuron}, 11:123--132, 1993.

\bibitem{Bio:Kleene3}
S.~J. Kleene and R.~C. Gesteland.
\newblock Calcium-activated chloride conductance in frog olfactory cilia.
\newblock {\em J. Neurosci}, 11:3624--3629, 1991.

\bibitem{Bio:Kleene2}
S.~J. Kleene and R.~C. Gesteland.
\newblock Transmembrane currents in frog olfactory cilia.
\newblock {\em J. Membr. Biol.}, 120:75--81, 1991.

\bibitem{Bio:Kleene4}
S.~J. Kleene, R.~C. Gesteland, and S.~H. Bryant.
\newblock An electrophysiological survey of frog olfactory cilia.
\newblock {\em J. Exp. Biol.}, 195:307--328, 1994.

\bibitem{Bio:Koutalos}
Y.~Koutalos, K.~Nakatani, and K.-W. Yau.
\newblock Cyclic {GMP} diffusion coefficient in rod photoreceptor outer cyclic
  gmp diffusion coefficient in rod photoreceptor outer cyclic gmp diffusion
  coefficient in rod photoreceptor outer segments.
\newblock {\em Biophysical J.}, 68:373--382, 1995.

\bibitem{Bio:Titchmarsh}
E.~C. Titchmarsh.
\newblock {\em Introduction to the Theory of Fourier Integrals}.
\newblock Clarendon Press, Oxford, 1937.

\end{thebibliography}
\end{document}